\newcommand{\commentgreen}[1]{{\color{green} #1}} %the comment is shown
\DeclareMathOperator{\Frobeniusoper}{f} %Frobenius number
\DeclareMathOperator{\multiplicityoper}{m} %multiplicity
\DeclareMathOperator{\leftsoper}{L} %lefts
\DeclareMathOperator{\primitivesoper}{P} %primitives
\DeclareMathOperator{\genusoper}{g} %genus
\newcommand{\maxprimset}[1]{\mathcal{A}_{#1}}
\newcommand{\maxprimcard}[1]{{A}_{#1}}
\newcommand{\frobset}[1]{\mathcal{N}_{#1}}
\newcommand{\frobcard}[1]{{N}_{#1}}
\newtheorem{theorem}{Theorem}
\newtheorem{lemma}[theorem]{Lemma}
\newtheorem{proposition}[theorem]{Proposition}
\theoremstyle{remark}
\newtheorem*{remark*}{Remark}
\definecolor{main}{HTML}{5989cf}    % setting main color to be used
\definecolor{sub}{HTML}{cde4ff}     % setting sub color to be used
\newtcolorbox{boxA}{
    enhanced, % for a fancier setting,
    boxrule = 0pt, % clearing the default rule
    borderline = {0.75pt}{0pt}{main}, % outer line
    borderline = {0.75pt}{1pt}{sub} % inner line
}
\title{Algorithms for numerical semigroups with fixed maximum primitive}
\author{Manuel Delgado, Neeraj Kumar}
\date{\today}
\address{CMUP--Centro de Matemática da Universidade do Porto, Departamento de Matemática, Faculdade de Ciências, Universidade do Porto, Rua do Campo Alegre s/n, 4169– 007 Porto, Portugal} 
\email{mdelgado@fc.up.pt, \commentgreen{neerajk095@gmail.com}} 
\thanks{The authors were partially supported by CMUP -- Centro de Matemática da Universidade do Porto, member of LASI, which is financed by national funds through FCT -- Fundação para a Ciência e a Tecnologia, I.P., under the project with reference UID/00144/2025, doi: \url{https://doi.org/10.54499/UID/00144/2025}. 
	The first author also acknowledges the Proyecto de Excelencia de la Junta de Andalucía (ProyExcel 00868).}
\begin{document}
\keywords{Numerical semigroup, Maximum primitive, Counting numerical semigroups, Wilf's conjecture}

\subjclass[2010]{20M14, 20--04, 05A15}

\begin{abstract}
 	We present an algorithm to explore various properties of the numerical semigroups with a given maximum primitive. In particular, we count the number of such numerical semigroups and verify that there is no counterexample to Wilf's conjecture among the numerical semigroups with maximum primitive up to \(60\).
\end{abstract}

\maketitle
%\tableofcontents

%%%%%%%%%%% introduction %%%%%%%%%%
\section{Introduction}\label{sec:introduction}
We denote the set of nonnegative integers by \(\mathbb{N}\). A \emph{numerical semigroup} \(S\) is a cofinite submonoid of \(\mathbb{N}\). The smallest finite set that generates \(S\) is referred to as the set of \emph{minimal generators} of \(S\), or the set of \emph{primitives} of \(S\) and we denote this set by \(\primitivesoper(S) \). The existence and uniqueness of \(\primitivesoper(S) \) is a well known result (see~\cite[Theorem 2.7]{Garcia-SanchezRosales1999PJM-Numerical}). The cardinality of \(\primitivesoper(S)\) is called the \emph{embedding dimension} of \(S\) and denoted by \(e(S)\). The smallest primitive of \(S\) is called the \emph{multiplicity} of \(S\) and is denoted by \(\multiplicityoper(S)\), and its largest primitive is called the \emph{maximum primitive} of \(S\).
A positive integer that does not belong to a numerical semigroup \(S\) is called a \emph{gap} of \(S\), and the total number of gaps of \(S\) is called the \emph{genus} of \(S\) and is denoted by \(\genusoper(S)\). The largest integer that does not belong to \(S\) is called the \emph{Frobenius number} of \(S\) and is denoted by \(\Frobeniusoper(S)\), and \(\Frobeniusoper(S)+1\) is referred to as the \emph{conductor} of \(S\) and is denoted by \(c(S)\). The elements of \(S\) smaller than \(\Frobeniusoper(S)\) are called the left elements of \(S\) and we denote the number of left elements of \(S\) by \(\ell(S) \). We refer to \( \lceil c(S)/\multiplicityoper(S)\rceil \) as the  \emph{depth} of \(S\), and \( \lceil \max P(S) /\multiplicityoper(S)\rceil \) as the \emph{primitive depth} of \(S\). We omit the mention of \(S\) when it is understood from the context and write \(\multiplicityoper,e, \genusoper , \Frobeniusoper, \ell, \) etc instead of \(\multiplicityoper(S), e(S), \genusoper(S), \Frobeniusoper(S), \ell(S),\) etc respectively. Wilf~\cite{Wilf1978AMM-circle} asked whether the inequality
 \[e\ell\geq c\]
holds for all numerical semigroups, and the affirmative answer to this is nowadays referred to as \emph{Wilf's conjecture}. 

Given a positive integer \(n\), we denote the set of numerical semigroups with Frobenius number equal to \(n\) by \(\frobset{n}\), and its cardinality by \(\frobcard{f}\). The \emph{counting} of numerical semigroups refers to the problem of enumerating the set of all numerical semigroups with a given property. For instance, the problem of computing \(\frobcard{n}\) is referred to as the problem of \emph{counting numerical semigroups by Frobenius number} or, in short, \emph{counting by Frobenius number}. We note that alternatively one can define the counting by Frobenius number as the estimation of the sequence \((\frobcard{n})\). Similarly, we define \emph{counting by genus} and \emph{counting by maximum primitive}.

Wilf~\cite{Wilf1978AMM-circle} posed the problem of estimating the value of \(\frobcard{n}\) as \(n\) becomes large. Backelin~\cite{Backelin1990MS-number} answers this by proving that \(\frobcard{n}\) grows asymptotically as a constant multiple of \(2^{n/2}\) where the constant depends upon the parity of \(n\). He further gives upper and lower bounds for \(\frobcard{n}\). More recently, Singhal~\cite{Singhal2022SF-Distribution} shows that \((\frobcard{2n})\) and \((\frobcard{2n+1})\) are monotonous sequences, and discusses other asymptotic properties of the set \(\frobset{n}\). Li~\cite{Li2023CT-Counting} considers the asymptotic growth of certain subsets of \(\frobset{n}\).

We denote the number of numerical semigroups with genus \(g\) by \(n_g\). The counting by genus gained interest when Bras-Amorós~\cite{Bras-Amoros2008SF-Fibonacci} conjectured a Fibbonacci-like behavior in the sequence \((n_g)\), in a paper where she computed \(n_g\) for  \(g\le 50\) and verified that there is no counter-example to Wilf's conjecture among the numerical semigroups with genus up to \(50\). A breakthrough occurred in a paper by Fromentin and Hivert~\cite{FromentinHivert2016MC-Exploring} where they computed \(n_g\) for \( g\le 67\) and verified that there is no counter-example to Wilf's conjecture among the numerical semigroups with genus up to \(60\). Furthermore, the value of \(n_g\) up to genus~\(76\) have recently been obtained by Bras-Amorós~\cite{Bras-Amoros2025pp-Exploring}. Moreover the verification of Wilf's conjecture up to genus \(100\) has been obtained recently by Delgado, Eliahou and Fromentin~\cite{DelgadoEliahouFromentin2025JoA-verification}. A pertinent detail from this verification is that unlike the previous verifications where all the numerical semigroups up to the given genus were explored, the methods used in~\cite{DelgadoEliahouFromentin2025JoA-verification} made use of known theoretical results on Wilf's conjecture to avoid the direct verifications of a significant number of numerical semigroups. It is worth noting that \(n_{100}\) is quite large and the maximum genus for which all the numerical semigroups have been explored is~\(76\). We observe that given a numerical semigroup \(S\) with Frobenius number \(f\), the maximum possible value of genus \(g(S)\) is \(f\) which corresponds to the case when \(S\) is \(\langle f+1,f+2, \ldots, 2f\rangle\). Therefore the verification of Wilf's conjecture up to genus less than or equal to 100 from~\cite{DelgadoEliahouFromentin2025JoA-verification} implies that the numerical semigroups with Frobenius number less than or equal to \(100\) satisfy Wilf's conjecture. 

Counting by maximum primitive was recently introduced in the second author's PhD thesis~\cite{Kumar2025phd-Numerical}. We denote the set of numerical semigroups with maximum primitive equal to \(n\) by \(\maxprimset{n}\), and its cardinality by \(\maxprimcard{n}\). The bounds for \(\maxprimcard{n}\) and its asymptotic behaviour have been discussed in~\cite{DelgadoKumarMarion2025pp-counting}. Moreover the monotonicity of the sequences \((\maxprimcard{2n})\) and \((\maxprimcard{2n+1})\) has been proven in~\cite{Kumar2025CiA-Monotonicity}. We are interested in exploring the properties of the set \(\maxprimset{n}\) for \(n\leq 60\), and in particular in computing its cardinality. Since most of the numerical semigroups in \(\maxprimset{n}\) for large \(n\) have primitive depth equal to 2 or 3 (see~\cite[Proposition 7.2]{DelgadoKumarMarion2025pp-counting}), finding a formula for the number of primitive depth 2 semigroups in \(\maxprimset{n}\) allows us to avoid a lot of computation. Let \(\frobset{f}(d)\) denote the set of numerical semigroups with Frobenius number \(f\) and depth \(d\), and \( \frobcard{f}(d)\) its cardinality. Let \(\maxprimset{n}(d)\) denote the set of numerical semigroups with maximum primitive \(n\) and primitive depth \(d\), and \(\maxprimcard{n}(d)\) its cardinality. We now prove the following generalization of \cite[Theorem 1.5]{DelgadoKumarMarion2025pp-counting} related to the cardinality of numerical semigroups with a given primitive depth. In particular we obtain a formula for \(\maxprimcard{n}(2)\). Let \(\mu\) denote the Möbius function.

\begin{proposition}\label{prop:cardinality_of_depth-pdepth_bijection} Given \(n>2\), and \(k\) positive, we have 
	\[A_n(k) = \sum_{d\mid n} \mu \left(\frac{n}{d}\right) \cdot N_{d}(k)\]
	and in particular \[A_n(2) = \sum_{d\mid n} \mu \left(\frac{n}{d}\right) \cdot \left(2^{\lfloor\frac{d-1}{2} \rfloor} -1\right).\]
\end{proposition}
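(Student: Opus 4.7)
The plan is to prove the equivalent identity $N_n(k) = \sum_{d \mid n} A_d(k)$ by exhibiting a depth-preserving bijection and then applying Möbius inversion; the explicit depth-2 formula then follows from computing $N_d(2)$ in closed form.

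For the bijection, given a divisor $d$ of $n$ and a numerical semigroup $S$ with $\max P(S) = d$, set $r = n/d$ and define
\[
\phi(S) = \{0\} \cup \{rs : s \in S,\ 0 < s < d\} \cup [n+1, \infty).
\]
I will verify that $\phi(S)$ is a numerical semigroup with Frobenius number $n$. Closure under addition is the nontrivial step: a sum $rs_1 + rs_2 = n$ with $s_i \in S \setminus \{0\}$ and $s_i < d$ would force $d = s_1 + s_2$, contradicting the primitivity of $d$; every other sum either remains in $\{rs : s \in S,\ 0 < s < d\}$ or lands in $[n+1, \infty)$. Since $n \notin \phi(S)$ while $[n+1, \infty) \subseteq \phi(S)$, we get $F(\phi(S)) = n$.

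Depth is preserved. For $S \neq \mathbb{N}$, the multiplicity $m(S)$ cannot divide $d$, since otherwise $d$ would be a multiple (and hence a sum) of copies of the primitive $m(S)$. Thus $d/m(S)$ lies strictly between two consecutive integers; combined with $m(\phi(S)) = r\,m(S)$, an elementary computation gives $\lceil (n+1)/(rm(S)) \rceil = \lceil d/m(S) \rceil$, matching the depth of $\phi(S)$ with the primitive depth of $S$. The degenerate case $S = \mathbb{N}$ yields $\phi(S) = \{0\} \cup [n+1, \infty)$, where both quantities equal $1$. For the inverse, given $T$ with $F(T) = n$, set $r = \gcd(\{n\} \cup (T \cap [1, n-1]))$, $d = n/r$, and $S = \langle \{x : rx \in T,\ 0 < x < d\} \cup \{d\}\rangle$. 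The crucial verification is that $d$ is a primitive of $S$: if $d$ were a sum of smaller generators $x_i$, then $n = rd = \sum r x_i$ would lie in $T$, contradicting $F(T) = n$. A direct comparison on $[0,n-1]$ and $[n+1,\infty)$ shows $\phi(S) = T$, and Möbius inversion of $N_n(k) = \sum_{d \mid n} A_d(k)$ yields the first identity.

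For the depth-2 formula, I will count $N_d(2)$ directly. A numerical semigroup $T$ with $F(T) = d$ and depth $2$ has the form $T = \{0\} \cup B \cup [d+1, \infty)$, where $B \subseteq [m, d-1]$ contains its minimum element $m \geq \lceil (d+1)/2 \rceil$. Since $2m \geq d+1$, every pairwise sum of elements of $B$ already lies in $[d+1, \infty) \subseteq T$, so no additional closure constraint restricts $B$; summing $2^{d-1-m}$ over admissible $m$ yields $N_d(2) = 2^{\lfloor (d-1)/2 \rfloor} - 1$. Substituting this into the first identity produces the stated closed form for $A_n(2)$. I expect the main obstacle to be the depth-preservation computation (especially isolating the role of the hypothesis $m(S) \nmid d$) and confirming that the reconstructed $S$ has maximum primitive exactly $d$, so that $\phi$ and its inverse are truly mutually inverse.
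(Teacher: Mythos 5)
Your proposal is correct and follows essentially the same route as the paper: your map \(\phi\) (scaling the sub-maximum elements by \(r=n/d\) and completing above \(n\)) is precisely the inverse of the paper's bijection \(\Psi(T)=\bigl\langle \leftsoper'(T)/\gcd(\leftsoper'(T))\bigr\rangle\) between \(\frobset{n}(k)\) and \(\bigcup_{d\mid n}\maxprimset{d}(k)\), followed by the same Möbius inversion and the same direct count \(N_d(2)=2^{\lfloor (d-1)/2\rfloor}-1\). The remaining routine check you flag (that \(\phi\) and the reconstruction are mutually inverse) does go through, since the elements of \(S\) below \(d\) together with \(d\) contain all primitives of \(S\) and hence have greatest common divisor \(1\), so the scaling factor \(r\) is recovered exactly.
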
 

We count the numerical semigroups in \(\maxprimset{61}\) and \(\maxprimset{62}\) by including the above result, and  obtain the following result.

\begin{proposition}\label{prop:new_values_An_Nn}
	The value of \(\maxprimcard{61}\) is \(\num{2640706082}\) and \(\maxprimcard{62}\) is \(\num{2606696049}\). Moreover the value of \(\frobcard{61}\) is \(\num{2640706083}\) and \(\frobcard{62}\) is \(\num{2606766903}\). 
\end{proposition}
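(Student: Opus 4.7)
The plan is to use the algorithm developed in this paper to exhaustively enumerate the numerical semigroups in \(\maxprimset{61}\) and \(\maxprimset{62}\), stratifying the count by primitive depth. For each \(n\in\{61,62\}\) I decompose
\[\maxprimcard{n} \;=\; \maxprimcard{n}(2) \;+\; \sum_{d\geq 3} \maxprimcard{n}(d).\]
The term \(\maxprimcard{n}(2)\) is evaluated instantly from the closed formula in Proposition~\ref{prop:cardinality_of_depth-pdepth_bijection}. Since for large \(n\) the overwhelming majority of semigroups in \(\maxprimset{n}\) have primitive depth \(2\) or \(3\) (Proposition~7.2 of~\cite{DelgadoKumarMarion2025pp-counting}), handling \(d=2\) analytically removes the bulk of the count and renders the remaining traversal feasible.

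For each \(d\geq 3\), I would invoke the enumeration algorithm over \(\maxprimset{n}(d)\) and accumulate the count; summing with \(\maxprimcard{n}(2)\) yields \(\maxprimcard{61}\) and \(\maxprimcard{62}\). The corresponding values \(\frobcard{61}\) and \(\frobcard{62}\) would be obtained by a parallel traversal adapted to the Frobenius-number constraint, i.e.\ running the same family of algorithms but rooted and pruned so that each enumerated semigroup lies in \(\frobset{n}\) rather than \(\maxprimset{n}\). Note that \(\maxprimset{n}\) and \(\frobset{n}\) are disjoint (the maximum primitive of \(S\) belongs to \(S\), whereas the Frobenius number does not), so the two enumerations are independent but structurally analogous. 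Since Wilf's conjecture has been verified up to Frobenius number \(100\) in~\cite{DelgadoEliahouFromentin2025JoA-verification}, every semigroup touched by the two enumerations is guaranteed to satisfy \(e\ell\geq c\), which provides an additional consistency check during the computation.

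The principal obstacle is computational rather than mathematical: with counts on the order of \(2.6\times 10^9\), the depth-\(3\) enumeration in particular will dominate CPU and memory, so the algorithm must be implemented with aggressive pruning, a compact representation of partial semigroups, and ideally parallelisation. Before trusting the new values I would cross-check the implementation by reproducing the published values of \(\maxprimcard{n}\) and \(\frobcard{n}\) for \(n\leq 60\), and verify that the depth-\(2\) count coming from Proposition~\ref{prop:cardinality_of_depth-pdepth_bijection} and the direct depth-\(\geq 3\) enumeration agree with the asymptotic split predicted in~\cite{DelgadoKumarMarion2025pp-counting}.
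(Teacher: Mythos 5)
Your plan for obtaining \(\maxprimcard{61}\) and \(\maxprimcard{62}\) --- dispose of the primitive depth~2 semigroups with the closed formula of Proposition~\ref{prop:cardinality_of_depth-pdepth_bijection} and enumerate the rest with the algorithms of the paper --- is exactly what the paper does for that half of the statement, with the usual caveat that the proposition is ultimately a computational claim, so the ``proof'' is the (cross-checked) computation itself.

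The gap is in how you propose to get \(\frobcard{61}\) and \(\frobcard{62}\). You suggest a second, independent enumeration obtained by ``running the same family of algorithms but rooted and pruned so that each enumerated semigroup lies in \(\frobset{n}\)'', but the algorithms here are built around a \emph{fixed maximum primitive} \(M\) (and multiplicity \(m\)): the exclusion of divisors of \(M\), Lemma~\ref{lemma:M-lcomb}, and Algorithm~\ref{alg:possible_large_primitives} all exploit the fact that \(M\) is a primitive, and nothing in them controls the Frobenius number, which for a semigroup with maximum primitive \(M\) can greatly exceed \(M\). So this is not a minor adaptation but a genuinely different (and unnecessary) algorithm, plus a second enumeration of roughly \(2.6\times 10^9\) objects. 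The paper avoids all of this: by \cite[Theorem 1.5]{DelgadoKumarMarion2025pp-counting} one has \(\frobcard{n}=\sum_{d\mid n}\maxprimcard{n/d}\) (the depth-refined version of this identity is exactly what Lemma~\ref{lemma:bijection_of_depth_and_prim_depth} provides, before the M\"obius inversion used in Proposition~\ref{prop:cardinality_of_depth-pdepth_bijection}), so the Frobenius counts come for free from Table~\ref{table:counting_by_maxprim-and-frob}: \(\frobcard{61}=\maxprimcard{61}+\maxprimcard{1}=\num{2640706083}\) and \(\frobcard{62}=\maxprimcard{62}+\maxprimcard{31}+\maxprimcard{2}+\maxprimcard{1}=\num{2606766903}\). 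Your remark that \(\maxprimset{n}\) and \(\frobset{n}\) are disjoint, while true, points away from the key fact that the two counting problems are tied together by this divisor identity, and the appeal to the genus-\(100\) Wilf verification is irrelevant to establishing the counts.
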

 
 We also explore \(\maxprimset{n}\) for possible counterexample of Wilf's conjecture. We first recall that all numerical semigroups with multiplicity \(m\) less than or equal to 19 satisfy Wilf's conjecture~\cite{KliemStump2022DCG-new}. We next discuss how this result allows us to conclude that Wilf's conjecture holds for numerical semigroups with certain small values of maximum primitives. Since the maximum primitive \(n\) is greater than or equal to \(m\), we observe that the numerical semigroups in \(\maxprimset{n}\) satisfy Wilf's conjecture for all \(n\) less than or equal to 19. We further consider the case of \(\maxprimset{n}\) for \(n\) in \(\{20, 21,22,23\} \) for which the Wilf's conjecture follows from known results. The multiplicity of any numerical semigroup in \(\maxprimset{20}\) is at most 19, so it satisfies Wilf's conjecture. For a numerical semigroup \(S\) in \(\maxprimset{21}\) or \(\maxprimset{22}\), either \(m(S)\) is less than 20 and thus \(S\) satisfies Wilf's conjecture~\cite{KliemStump2022DCG-new}, or the primitives of \(S\) are contained in the set \(\{20, 21, 22\} \) which implies that the embedding dimension of \(S\) is 2 or 3 and thus it satisfies Wilf's conjecture by~\cite{Sylvester1882AJM-Subvariants} or~\cite{FroebergGottliebHaeggkvist1987SF-numerical}. For \(S\) in \(\maxprimset{23}\), either we have \(m(S)\) less than 20 for which \(S\) satisfies Wilf's conjecture by~\cite{KliemStump2022DCG-new}, or \(m(S)\) is greater than or equal to \(20\) in which case we either have embedding dimension less than four or \(S\) is \(\langle 20,21,22,23 \rangle\). The former is known to satisfy Wilf's conjecture by ~\cite{KliemStump2022DCG-new}, and the primitives of \(\langle 20,21,22,23 \rangle\) form an arithmetic progression, so it satisfies Wilf's conjecture by~\cite{Sammartano2012SF-Numerical}. 

We next consider the numerical semigroups \(S\) in \(\maxprimset{24}\) which is the first non-trivial case that is not covered under known results. We note that if \(e(S)\) is less then or equal to 3 or if \(m(S)\) is less than 20 then Wilf's conjecture follows from~\cite{Sylvester1882AJM-Subvariants},~\cite{FroebergGottliebHaeggkvist1987SF-numerical} and~\cite{KliemStump2022DCG-new}. We therefore now consider the cases when \(e(S)\) is at least 4 and \(m(S)\) is at least 20. If \(e(S)\) is equal to 5 then \(S\) is a numerical semigroup generated by an arithmetic sequence which is known to satisfy Wilf's conjecture, so we further assume that \(e(S)\) is equal to 4. Moreover if \(m(S)\) is greater than 20 then we have \(\langle 21,22,23, 24 \rangle \) which has its primitives forming an arithmetic sequence. Thus we finally consider \(m(S)\) equal to 20 and \(e(S)\) equal to 4, and we have two possibilities: \(S\) is either \(\langle 20, 21,22, 24 \rangle \) or \(\langle 20,22,23, 24 \rangle \). We check these cases explicitly as a part of our work and observe that they indeed satisfy Wilf's conjecture. Thus all numerical semigroups in \(\maxprimset{24}\) satisfy Wilf's conjecture. We use computations to obtain the main result of the article.

\begin{theorem}\label{thm:verification_Wilf_upto_maxprim_60}
	Given a positive integer \(n\leq 60\), every numerical semigroup in \(\maxprimset{n}\) satisfies Wilf's conjecture. 
\end{theorem}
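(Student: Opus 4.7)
The plan is to combine the theoretical arguments already sketched in the introduction with a computer-assisted verification for the remaining values of $n$.

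For $n \leq 24$, I would appeal to the case analysis carried out just before the statement of the theorem. For $n \leq 19$, Wilf's conjecture follows from~\cite{KliemStump2022DCG-new} because $m(S) \leq n \leq 19$. For $20 \leq n \leq 23$, the observation is that for every $S \in \maxprimset{n}$ either $m(S) \leq 19$, or the embedding dimension is at most three, or the generators form an arithmetic progression, so existing results suffice. For $n = 24$ the reduction leaves only the two explicit semigroups $\langle 20, 21, 22, 24\rangle$ and $\langle 20, 22, 23, 24\rangle$, and both are handled by a direct computation of $e$, $\ell$ and $c$.

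For each $n$ with $25 \leq n \leq 60$, the strategy is to enumerate $\maxprimset{n}$ by a systematic tree traversal and to verify $e(S)\ell(S) \geq c(S)$ for every semigroup visited. Several reductions keep this tractable. First, one may restrict to $m(S) \geq 20$ by~\cite{KliemStump2022DCG-new} and to $e(S) \geq 4$ by~\cite{Sylvester1882AJM-Subvariants, FroebergGottliebHaeggkvist1987SF-numerical}. Second, semigroups whose primitives form an arithmetic progression are covered by~\cite{Sammartano2012SF-Numerical}. Finally, Proposition~\ref{prop:cardinality_of_depth-pdepth_bijection} gives an exact count of primitive-depth-$2$ semigroups in $\maxprimset{n}$, which both serves as a checksum on the enumeration and, in combination with the structural constraint $\max P(S) \leq 2 m(S)$, facilitates a streamlined handling of those cases.

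The main obstacle will be the combinatorial size: $\maxprimcard{60}$ is already on the order of $10^{9}$, so the enumeration must be implemented carefully, most likely with parallelization, to terminate in reasonable time. A complementary concern is correctness: to guard against silent omissions, I would cross-validate the resulting cardinalities $\maxprimcard{n}$ against values computed independently in~\cite{DelgadoKumarMarion2025pp-counting}, and match the implied counts $\frobcard{n}$ with the data supporting~\cite{DelgadoEliahouFromentin2025JoA-verification}. Once the enumeration is known to be exhaustive and every semigroup visited is seen to satisfy $e\ell \geq c$, the theorem follows.
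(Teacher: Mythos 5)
Your proposal follows essentially the same route as the paper: known results (small multiplicity, small embedding dimension, arithmetic progressions) dispose of $n\le 24$, and an exhaustive tree-based enumeration of \(\maxprimset{n}\) for the remaining \(n\le 60\) with the Wilf inequality tested at each semigroup, cross-checked against the counts of~\cite{DelgadoKumarMarion2025pp-counting}, yields the theorem. One caveat: Proposition~\ref{prop:cardinality_of_depth-pdepth_bijection} is purely a counting shortcut — primitive depth $2$ does not by itself imply Wilf's conjecture (it does not force \(c\le 3m\)), so those semigroups must still pass through the \(e\ell\ge c\) test, which is exactly how the paper's Algorithm~\ref{alg:tree_rooted_nM} treats them via the inserted test in the depth-2 branch.
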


We show later that this finite family of numerical semigroups contains new examples that do not belong to the important classes of numerical semigroups that are known to satisfy Wilf's conjecture through the previously known results.

The outline of the article is as follows. 
In Section~\ref{sec:results} we provide some simple theoretical results.
In Section~\ref{sec:naive_algorithm} we present a naive algorithm to illustrate the key idea,
 and we devote Section~\ref{sec:optimized_algorithm} to the optimized algorithm. Section~\ref{sec:optimizations_for_counting} contains the special case of the optimized code for counting by maximum primitive. In addition we give theoretical calculations. 
Finally in Section~\ref{sec:computations} we discuss the results of the computations.

%% \newpage

%%%%%%%%%%% results %%%%%%%%%%
\section{A few simple results}\label{sec:results}
We state here a few simple or well known results that will help us in proving the correctness of our algorithms.
For basics on numerical semigroups see the book by Rosales and García-Sánchez~\cite{RosalesGarcia2009Book-Numerical}.

\begin{lemma}\label{lemma:basics}
	Let \(S\) be a numerical semigroup with maximum primitive \(M\) and multiplicity \(m\).
	\begin{enumerate}
		\item No divisor of \(M\) smaller than \(M\) belongs to \(S\).
		\item No multiple of \(m\) bigger than \(m\) is a minimal generator of \(S\).
	\end{enumerate}
\end{lemma}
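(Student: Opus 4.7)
Both parts rest on the same defining property of a minimal generator: an element $x \in S$ is a minimal generator precisely when it cannot be written as a sum of two or more nonzero elements of $S$ (otherwise it would be redundant in any generating set that contains the summands). The strategy for each part is to exhibit such a decomposition whose existence contradicts minimality.

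For part (1), I would argue by contradiction. Suppose $d$ is a divisor of $M$ with $d < M$ and $d \in S$. Then $d \geq 1$ and $k := M/d$ is a positive integer with $k \geq 2$, so $M = d + d + \cdots + d$ ($k$ summands) expresses $M$ as a sum of at least two nonzero elements of $S$. This contradicts the fact that $M$ is a minimal generator of $S$.

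For part (2), let $n = km$ with $k \geq 2$. Since the multiplicity $m$ is a primitive and in particular belongs to $S$, the identity $n = m + m + \cdots + m$ ($k$ summands) exhibits $n$ as a sum of at least two nonzero elements of $S$, so $n$ cannot be a minimal generator.

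There is no substantive obstacle here; the statement is essentially an unpacking of the definition of minimal generator. The only fine point is ensuring each summand is nonzero in the contradicting decomposition, which is immediate since $d \geq 1$ and $m \geq 1$.
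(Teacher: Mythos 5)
Your argument is correct and is exactly the standard one: the paper states Lemma~\ref{lemma:basics} without proof, treating it as one of the ``simple or well known results,'' and the intended justification is precisely the characterization you use, namely that a primitive of \(S\) cannot be written as a sum of two nonzero elements of \(S\), applied to the decompositions \(M=kd\) and \(km=m+(k-1)m\). No gaps; both parts are handled correctly, including the nonzero-summand point.
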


\begin{lemma}\label{lemma:depth2-is-easy}
	Let \(m\) and \(M\) be positive integers such that \(m>M/2\) and let \(Y\) be a subset of the range \((m,M)\). Then \(\langle Y\cup\{m,M\}\rangle\in \maxprimset{M,m}\) if and only if \(\gcd(Y\cup\{m,M\})=1\). 
\end{lemma}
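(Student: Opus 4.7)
The plan is to observe that the hypothesis $m>M/2$ forces $2m>M$, which means any sum of two or more elements taken from the set $Y\cup\{m,M\}\subseteq [m,M]$ is strictly larger than $M$. This single observation does almost all the work: it guarantees that every element of $Y\cup\{m,M\}$ is already a minimal generator of the monoid it generates, since it cannot be written as a nontrivial nonnegative-integer combination of the others.

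For the ($\Leftarrow$) direction I would argue as follows. Assume $\gcd(Y\cup\{m,M\})=1$. Then $S=\langle Y\cup\{m,M\}\rangle$ is a numerical semigroup by the standard criterion. By the observation above, if $x\in Y\cup\{m,M\}$ were expressible as a sum of two or more generators (with repetition) then we would get $x\geq 2m>M\geq x$, a contradiction. Hence $Y\cup\{m,M\}$ is exactly the set $\primitivesoper(S)$ of minimal generators. The minimum of this set is $m$ (since $Y\subseteq (m,M)$) and its maximum is $M$, so $\multiplicityoper(S)=m$ and $\max\primitivesoper(S)=M$, giving $S\in\maxprimset{M,m}$.

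For the ($\Rightarrow$) direction the argument is essentially trivial: if $S=\langle Y\cup\{m,M\}\rangle\in\maxprimset{M,m}$ then $S$ is by definition a numerical semigroup (cofinite in $\N$), which is equivalent to its generators having $\gcd$ equal to $1$.

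There is no real obstacle in this proof; the whole content is the inequality $2m>M$, and the rest is bookkeeping about what it means for a generator to be minimal versus redundant. The only minor point worth stating carefully is that $Y$ is allowed to be empty (in which case the statement reduces to: $\langle m,M\rangle$ is a numerical semigroup with maximum primitive $M$ and multiplicity $m$ iff $\gcd(m,M)=1$), and that the case $Y=\emptyset$ is handled uniformly by the same argument.
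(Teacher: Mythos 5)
Your proof is correct, and it is exactly the argument the paper implicitly relies on: the paper states this lemma without proof as a simple observation, and the content is precisely your inequality $2m>M$, which forces every element of $Y\cup\{m,M\}$ to be a minimal generator (any sum of two nonzero elements of the semigroup is at least $2m>M$), combined with Lemma~\ref{lemma:coprime-generate-ns} for the equivalence with $\gcd=1$. Both directions and the bookkeeping about multiplicity $m$ and maximum primitive $M$ are handled correctly.
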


\begin{lemma}\label{lemma:M-lcomb}
	Let \(S\in \maxprimset{M}\). If \(x\in S\setminus\{0,M\}\), then \(M-x\not\in S\).
\end{lemma}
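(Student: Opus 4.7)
The plan is to argue by contradiction. Assume that there exists $x \in S \setminus \{0, M\}$ such that $M - x$ also belongs to $S$. Since $M - x \in S \subseteq \mathbb{N}$, we have $M - x \geq 0$, and since $x \neq M$ we in fact have $M - x > 0$. Similarly, $x > 0$ and $x \neq M$ forces $0 < x < M$. Thus the equality $M = x + (M - x)$ expresses $M$ as a sum of two strictly positive elements of $S$, each strictly smaller than $M$.

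The next step is to convert this into a contradiction with the fact that $M \in \primitivesoper(S)$, i.e.\ that $M$ is a minimal generator of $S$. Write $x$ and $M - x$ as nonnegative integer combinations of primitives of $S$; since every primitive appearing in such a decomposition of a positive element $y \in S$ is at most $y$, all primitives involved are strictly less than $M$. Adding the two decompositions exhibits $M$ as a nonnegative integer combination of primitives of $S$ distinct from $M$, contradicting the minimality of $M$ as a generator.

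I do not anticipate any real obstacle here: the argument is a two-line observation once one recalls that minimal generators of a numerical semigroup are precisely those elements that admit no nontrivial decomposition as a sum of smaller nonzero semigroup elements. The only subtlety worth flagging is ruling out $x > M$, which is immediate from $M - x \in \mathbb{N}$, so the hypothesis $x \in S \setminus \{0, M\}$ is used in full.
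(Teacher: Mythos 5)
Your proposal is correct and follows essentially the same route as the paper: writing \(M=x+(M-x)\) as a sum of two nonzero elements of \(S\) and contradicting the fact that \(M\) is a primitive (minimal generator). The extra care you take about \(M-x>0\) and the decomposition into primitives is just a more detailed spelling-out of the paper's one-line argument.
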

\begin{proof}
	Suppose that \(M-x\in S\). But then \(M=(M-x)+x\), contradicting the hypothesis that \(M\) is primitive.
\end{proof}

\begin{lemma}\label{lemma:primitives_incongruent_mod_m}
	Let \(S\) be a numerical semigroup with multiplicity \(m\). The primitives of \(S\) are not congruent modulo \(m\). In particular, the embedding dimension of a numerical semigroup with multiplicity \(m\) does not exceed \(m\).
\end{lemma}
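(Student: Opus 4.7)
The plan is to prove the contrapositive of the first assertion: show that if two distinct primitives were congruent modulo $m$, one of them would fail to be a minimal generator.

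First I would suppose, toward a contradiction, that $p$ and $p'$ are two distinct primitives of $S$ with $p < p'$ and $p \equiv p' \pmod m$. Then $p' - p$ is a positive integer divisible by $m$, so we can write $p' = p + km$ for some integer $k \geq 1$. Since $m$ is the multiplicity, $m \in S$, and therefore $km$ lies in $S$ as a sum of $k$ copies of $m$. Hence $p' = p + km$ exhibits $p'$ as the sum of two nonzero elements of $S$, which contradicts the fact that $p'$ is a minimal generator. This forces the primitives to lie in pairwise distinct residue classes modulo $m$.

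For the ``in particular'' clause, I would simply observe that there are exactly $m$ residue classes modulo $m$, and by what was just shown the injection from $\primitivesoper(S)$ to $\mathbb{Z}/m\mathbb{Z}$ sending a primitive to its residue is well-defined, giving $e(S) = |\primitivesoper(S)| \leq m$.

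I do not expect any step to be a real obstacle; the main point is simply to notice that congruence modulo $m$ together with membership of $m$ in $S$ immediately yields a nontrivial additive decomposition of the larger primitive, contradicting minimality. No appeal to the earlier lemmas in this section is needed.
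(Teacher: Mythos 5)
Your proof is correct: the paper states this lemma without proof, listing it among ``simple or well known results,'' and your argument is exactly the standard one it has in mind. Writing the larger primitive as $p' = p + km$ with $km \in S$ (since $m \in S$ and $k \geq 1$) contradicts minimality of $p'$, and the pigeonhole count over the $m$ residue classes gives $e(S) \leq m$; nothing is missing.
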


\begin{lemma}\label{lemma:coprime-generate-ns}
	A set \(Y\) of positive integers generates a numerical semigroup if and only if \(\gcd(Y)=1\).
\end{lemma}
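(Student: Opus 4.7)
The plan is to prove the two implications separately, with the forward direction being a one-line divisibility observation and the reverse direction reducing, via Bezout's identity, to the classical two-variable Sylvester--Frobenius theorem that the paper can treat as standard background.

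For the forward direction, I assume $\langle Y\rangle$ is cofinite in $\mathbb{N}$ and set $d=\gcd(Y)$. Since every non-negative integer combination of elements of $Y$ is divisible by $d$, one has $\langle Y\rangle\subseteq d\mathbb{N}$. If $d\ge 2$, then $\mathbb{N}\setminus d\mathbb{N}$ is an infinite subset of $\mathbb{N}\setminus\langle Y\rangle$, contradicting cofiniteness. Hence $d=1$.

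For the reverse direction, I assume $\gcd(Y)=1$ and aim to produce two positive elements $a,b\in\langle Y\rangle$ with $\gcd(a,b)=1$, after which the classical result that $\langle a,b\rangle$ contains every integer at least $(a-1)(b-1)$ finishes the proof via $\langle a,b\rangle\subseteq\langle Y\rangle$. First, by a standard property of the gcd, one can select a finite subset $\{y_1,\ldots,y_k\}\subseteq Y$ with $\gcd(y_1,\ldots,y_k)=1$, and it suffices to treat this finite case. Bezout's identity provides integers $c_1,\ldots,c_k\in\mathbb{Z}$ with $\sum_i c_iy_i=1$. Separating positive and negative coefficients, write
\[
a=\sum_{c_i>0}c_iy_i, \qquad b=\sum_{c_i<0}(-c_i)y_i,
\]
so that $a,b$ are non-negative integer combinations of the $y_i$ (hence in $\langle Y\rangle$) and $a-b=1$. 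Any common divisor of $a$ and $b$ divides $a-b=1$, so $\gcd(a,b)=1$, and $a,b$ are positive (neither can be $0$ since $a-b=1$ forces $a\ge 1$, and $b=0$ would already give $1\in\langle Y\rangle$ and thus $\langle Y\rangle=\mathbb{N}$).

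The one nontrivial ingredient is the two-generator Sylvester--Frobenius fact, which is exactly what Lemma~\ref{lemma:basics} and the other ``simple results'' in this section are meant to take as background from~\cite{RosalesGarcia2009Book-Numerical}, so I would simply cite it. The main place one has to be careful is the passage from the abstract Bezout identity to an honest pair of coprime elements of $\langle Y\rangle$: the splitting into positive and negative parts is routine but needs to be written down explicitly, together with the degenerate case $b=0$, which directly yields $1\in\langle Y\rangle$ and hence $\langle Y\rangle=\mathbb{N}$.
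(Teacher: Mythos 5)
Your proof is correct. The paper offers no proof of this lemma at all: it is listed among the ``simple or well known results'' of Section~\ref{sec:results}, with the reader pointed to the Rosales--García-Sánchez book, and your argument (divisibility of every element of \(\langle Y\rangle\) by \(\gcd(Y)\) for necessity; reduction to a finite subset, Bezout, the positive/negative split giving coprime \(a,b\in\langle Y\rangle\), and the two-generator Sylvester--Frobenius bound for sufficiency, with the degenerate case \(b=0\) handled) is exactly the standard argument the authors implicitly rely on, so nothing further is needed.
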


%% \newpage

%%%%%%%%%%% naive algorithm %%%%%%%%%%
\section{A naive algorithm}\label{sec:naive_algorithm}
We start with a brute-force algorithm and then we move to a naive one by including a few simple observations.
An implementation and lots of patience led the authors to compute \(\maxprimcard{n}\) for \(n\le 60\) in~\cite{DelgadoKumarMarion2025pp-counting}. 

The pseudo-code we use for the description of the algorithms in the present paper has reminiscences of the \textsf{GAP} programming language. 
For the algorithms in the present paper we fix both the multiplicity \(m\) and the maximum primitive \(M\). 
Denote by \(\maxprimset{M,m}\) the set of numerical semigroups with multiplicity \(m\) and maximum primitive \(M\).
Notice that \(\maxprimset{M}=\bigcup_{1\le m\le M}\maxprimset{M,m}\), and the union is disjoint.

Algorithm~\ref{alg:brute-force} computes the set of numerical semigroups with multiplicity \(m\) and maximum primitive \(M\). 

\begin{algorithm}[h]  \caption{Brute-force algorithm to compute \(\maxprimset{M,m}\)}\label{alg:brute-force}
	\DontPrintSemicolon
	\SetKwComment{cgap}{\# }{} %comment in gap style
	\KwData{Positive integers \(M,m\), with \(m<M\)}\cgap{m stands for the multiplicity and M for the maximum primitive}
	\KwResult{\(\maxprimset{M,m}\)}
	
	\(set := [\quad]\)\;
	\nl\For{Y in \(\mathrm{Combinations}([m+1..M-1])\)}{\label{alg:brute-force:loop}
		\(P:=Y\cup \{m,M\}\)\;
		\If{\(\mathrm{Gcd}(P)=1\)}{\cgap{test if \(P\) generates a numerical semigroup}
			\If{\(P=\mathrm{MinimalGenerators}(\langle P \rangle)\)}{
				\(\mathrm{Add}(set,P)\)\;
			}
		}
	}
	\KwRet \(set\)
\end{algorithm}
Algorithm~\ref{alg:brute-force} computes \(\maxprimset{M,m}\) by looping (Line~\ref{alg:brute-force:loop}) through all the \(2^{M-m-1}\) subsets of \(\{m+1,\ldots,M-1\}\) and making some checks. This is infeasible for (not too) large \(M-m\).

Algorithm~\ref{alg:naive} is an optimization of Algorithm~\ref{alg:brute-force} based on simple observations. 

\begin{algorithm}[h]  \caption{A naive algorithm to compute \(\maxprimset{M,m}\)}\label{alg:naive}
	\DontPrintSemicolon
	\SetKwComment{cgap}{\# }{} %comment in gap style
	\KwData{Positive integers \(M,m\), with \(m<M\)}%\cgap{m stands for multiplicity and M for maximum primitive}
	\KwResult{\(\maxprimset{M,m}\)}
	\(set := [\quad]\)\;
	\nl \If{m>M/2}{\hfill\cgap{the primitive depth 2 case}\label{alg:naive:depth2}
	\nl	\For{Y in \textrm{Combinations}([m+1,M-1])}{\label{alg:naive:depth2-loop}
			\(P:=Y\cup \{m,M\}\)\;
			\If{\(\mathrm{Gcd}(P)=1\)}{
				\(\mathrm{Add}(set,P)\)
			}
		}
	\KwRet \(set\)
	}
	\nl \(div := \mathrm{Divisors}(M)\) \hfill \cgap{divisors of M} \label{alg:naive:divisors}
	\nl \(mult := \{km : 1 < k < M/m\}\) \hfill \cgap{multiples of m in (m,M)}\label{alg:naive:multiplesm}
	\nl \(X := \{m,\ldots,M\}\setminus(div\cup mult)\)\;\label{alg:naive:exclude_divs_multiples}
	\nl \For{\(i\in\{1,\ldots,\min(m-2,|X|)\}\)}{\label{alg:naive:emb_dim}
	\nl	\(comb := \mathrm{Combinations}(X,i)\)\;\label{alg:naive:combinations}
	\nl	\For{Y in comb}{\label{alg:naive:loop}
		\(P:=Y\cup \{m,M\}\)\;
	\nl	\If{\(\mathrm{Gcd}(P)=1\)}{\label{alg:naive:isnumericalsgp}
	\nl		\If{\(P=\mathrm{MinimalGenerators}(\langle P \rangle)\)}{\label{alg:naive:Pminimal}
	\nl			\(\mathrm{Add}(set,P)\)\;\label{alg:naive:add_semigroup}
		
				}
			}
		}
	}
	\KwRet \(set\)
\end{algorithm}

Some comments on Algorithm~\ref{alg:naive} follow:
\begin{itemize}
	\item The \emph{for-block} in the loop in Line~\ref{alg:naive:depth2-loop} basically reduces to computing a greatest common divisor, by Lemma~\ref{lemma:depth2-is-easy}.
	\item Multiples of \(m\) and divisors of \(M\) are not primitives of any semigroup in \(\maxprimset{M,m}\), by Lemma~\ref{lemma:basics}. They are excluded in Line~\ref{alg:naive:exclude_divs_multiples}.
	\item In Line~\ref{alg:naive:emb_dim} it is used the fact that the embedding dimension of a numerical semigroup does not exceed its multiplicity, as stated in Lemma~\ref{lemma:primitives_incongruent_mod_m}.
	\item Line~\ref{alg:naive:combinations} computes the subsets of \(X\) with \(i\) elements.
%	\item The loop in Line~\ref{alg:naive:loop} is executed several times (at most \(m-2\))...
	\item Line~\ref{alg:naive:isnumericalsgp} tests if \(Y\cup\{m,M\}\) generates a numerical semigroup.
	\item Line~\ref{alg:naive:Pminimal} tests whether \(Y\cup \{m,M\}\) is the (unique) minimal generating set of the numerical semigroup \(\langle Y\cup \{m,M\}\rangle\)
	\item Line~\ref{alg:naive:add_semigroup} adds the semigroup (given by its minimal generating set).  
\end{itemize}
From the comments made we easily deduce the following:
\begin{proposition}\label{prop:correction_naive}
	Algorithm~\ref{alg:naive} correctly produces the claimed output. 
\end{proposition}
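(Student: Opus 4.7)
The strategy is Möbius inversion: the general formula follows from the additive identity
\[N_n(k) = \sum_{d \mid n} A_d(k)\]
by Möbius inversion on the divisor lattice of \(n\). To prove the additive identity, I would construct a bijection
\[\Phi \colon \bigsqcup_{d \mid n} \mathcal{A}_d(k) \longrightarrow \mathcal{N}_n(k).\]
Given \(S \in \mathcal{A}_d(k)\), setting \(\ell = n/d\), I would define
\[\Phi(S) \;=\; \{\ell s : s \in S,\ s < d\} \cup \{x \in \mathbb{N} : x > n\},\]
which rescales the prefix \(S \cap [0, d-1]\) by \(\ell\) and reattaches the standard tail above \(n\).

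The verifications would be: (i) \(\Phi(S)\) is a numerical semigroup; the only delicate case of closure is ruling out \(\ell s_1 + \ell s_2 = n\) for \(s_1, s_2 \in S\) with \(s_1, s_2 < d\), which would force \(s_1 + s_2 = d\) with \(0 < s_1, s_2 < d\), contradicting that \(d\) is a minimal generator of \(S\); (ii) its Frobenius number is exactly \(n\), immediate by construction; (iii) its depth equals the primitive depth \(k\) of \(S\) — when \(\multiplicityoper(S) < d\), the multiplicity of \(\Phi(S)\) is \(\ell \cdot \multiplicityoper(S)\), and by Lemma~\ref{lemma:basics}(2) \(d/\multiplicityoper(S)\) is not an integer, so
\[\left\lceil \frac{n+1}{\ell\, \multiplicityoper(S)}\right\rceil = \left\lceil \frac{d}{\multiplicityoper(S)}\right\rceil = k;\]
the degenerate case \(S = \mathbb{N}\) (with \(d = \multiplicityoper(S) = 1\) and \(\Phi(\mathbb{N}) = \{0\} \cup \{x > n\}\)) is verified separately. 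For bijectivity, the inverse sends \(T \in \mathcal{N}_n(k)\) to \((d, S)\) where \(\ell = \gcd(\{n\} \cup (T \cap [1, n-1]))\), \(d = n/\ell\), and \(S = \langle \{t/\ell : t \in T \cap [0, n-1]\} \cup \{d\}\rangle\); the crucial point is that a hypothetical decomposition \(d = a + b\) in \(S\) with \(0 < a, b < d\) would force \(n = \ell a + \ell b \in T\), contradicting the Frobenius property.

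For the particular case \(k = 2\), I would evaluate \(N_d(2)\) directly. A semigroup with Frobenius \(d\) and depth \(2\) has multiplicity \(m\) satisfying \(\lceil(d+1)/2\rceil \leq m \leq d - 1\); since depth \(2\) gives \(c \leq 2m\), any pair of positive elements has sum at least \(2m \geq c\), and hence lies automatically in the semigroup. So the semigroup is freely determined by an arbitrary subset of \([m+1, d-1]\), giving \(2^{d-1-m}\) choices. Summing over \(m\),
\[N_d(2) = \sum_{m = \lceil(d+1)/2\rceil}^{d-1} 2^{d-1-m} = 2^{\lfloor(d-1)/2\rfloor} - 1,\]
and substituting into the main formula yields the claimed explicit expression. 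The main obstacle is the depth-preservation step in the bijection \(\Phi\), which rests on Lemma~\ref{lemma:basics}(2) to exclude integrality of \(d/\multiplicityoper(S)\).
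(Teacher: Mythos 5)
Your proposal does not address the statement at issue. The statement is Proposition~\ref{prop:correction_naive}, which asserts that Algorithm~\ref{alg:naive} correctly computes the set \(\maxprimset{M,m}\) of numerical semigroups with multiplicity \(m\) and maximum primitive \(M\). What you have written is instead a proof of the counting formula \(A_n(k)=\sum_{d\mid n}\mu(n/d)N_d(k)\) together with the evaluation of \(N_d(2)\) — that is, Proposition~\ref{prop:cardinality_of_depth-pdepth_bijection}, a different result proved elsewhere in the paper via the bijection \(\Psi\) and Möbius inversion. Nothing in your argument mentions the algorithm, its two branches, or why the sets it adds are exactly the minimal generating sets of the members of \(\maxprimset{M,m}\), so as a proof of the stated proposition it is entirely off target.

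A proof of the actual statement is an algorithm-correctness argument. You would need to check both soundness and completeness of the two branches: in the branch \(m>M/2\) (primitive depth 2), Lemma~\ref{lemma:depth2-is-easy} shows that the gcd test alone characterizes membership in \(\maxprimset{M,m}\), so every set \(Y\cup\{m,M\}\) added there is the full set of primitives of a semigroup in \(\maxprimset{M,m}\) and every such semigroup arises from some \(Y\subseteq(m,M)\); in the general branch, Lemma~\ref{lemma:basics} guarantees that removing divisors of \(M\) and proper multiples of \(m\) from the candidate pool loses no primitive of any semigroup in \(\maxprimset{M,m}\), Lemma~\ref{lemma:primitives_incongruent_mod_m} justifies bounding the number of additional generators by \(m-2\), Lemma~\ref{lemma:coprime-generate-ns} justifies the gcd test for generating a numerical semigroup, and the explicit test \(P=\mathrm{MinimalGenerators}(\langle P\rangle)\) ensures that only minimal generating sets with smallest element \(m\) and largest element \(M\) are recorded, and that each semigroup in \(\maxprimset{M,m}\) is produced exactly once (namely from \(Y=\primitivesoper(S)\setminus\{m,M\}\)). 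None of this reasoning appears in your proposal.
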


In terms of practical efficiency, we start observing that the time required to determine the minimal generating set of a numerical semigroup is not negligible in general. 
As computing a greatest common divisor is inexpensive, the primitive depth 2 case is much faster in Algorithm~\ref{alg:naive}.
The \emph{for-block} in the loops in Lines~\ref{alg:brute-force}.\ref{alg:brute-force:loop} and~\ref{alg:naive}.\ref{alg:naive:loop} are equal. But, in the case of Algorithm~\ref{alg:naive} the looping is over a much smaller set in general (note that the primitive depth~2 case is treated separately). 
The loop in Line~\ref{alg:naive}.\ref{alg:naive:loop} is executed several times (at most \(m-2\)).
%%%%%%%%%%%%%%%%%%%

Algorithm~\ref{alg:naive} was implemented using the \texttt{GAP}~\cite{GAP4.15.1} language and is part of the \texttt{GAP} package \texttt{numericalsgps}~\cite{NumericalSgps1.4.0}.
%% \newpage

%%%%%%%%%%% optimized algorithm %%%%%%%%%%
\section{Optimization and implementation}\label{sec:optimized_algorithm}
As already referred, the inefficiency of the algorithms in Section~\ref{sec:naive_algorithm} is caused in part by the fact that large sets have too many subsets. The purpose of Algorithm~\ref{alg:possible_large_primitives} is to reduce the size of some sets we have to deal with (in the same spirit of the naive optimization already considered in Section~\ref{sec:naive_algorithm}).

The data available in Table~\ref{table:counting_by_maxprim-and-frob}, in Section~\ref{sec:computations}, was obtained using the algorithms presented here.

Let \(P=\{m=p_1,\ldots,p_n=M\}\), with \(n>1\), be a set of integers such that \(p_1<\cdots<p_n\). Algorithm~\ref{alg:possible_large_primitives} computes a subset of the open interval \(\left(p_{n-1}, p_n\right)\) obtained by removing elements larger than \(p_{n-1}\) that are ``obviously'' not primitives of a numerical semigroup containing the elements of \(P\) as primitives.
 
\begin{algorithm}[h] \caption{PossibleLargePrimitives}\label{alg:possible_large_primitives}
	\DontPrintSemicolon
	\SetKwComment{cgap}{\# }{} %comment in gap style
	\KwData{A set \(P=\{m=p_1,\ldots,p_n=M\}\) of positive integers, with \(p_1<\cdots<p_n\) and \(n>1\).\hfill \cgap{m stands for multiplicity and M for maximum primitive}}
	\KwResult{A subset of \(\left(p_{n-1}, p_n\right)\) without some obviously non-primitives of a numerical semigroup containing \(P\) in its set of primitives}

	\nl	\(lcombs = \langle P \rangle\cap [0..M]\) \hfill \cgap{linear combinations of elements of \(P\)}\label{alg:possible-large-primitives:lcombs}
	\nl	\(PP = \left(p_{n-1}, p_n\right)\setminus (lcomb\cup M-lcomb)\)\; \label{alg:possible-large-primitives:linear_combinations}
	\(Imp = [\quad ]\)\hfill \cgap{list of some other non-primitives}
	\nl \For{p in PP}{			\label{alg:possible-large-primitives:further_optimization}
			\If{\(M \in \langle Q\cup {p}\rangle\)}{
				Add(\(Imp,p\))
			} 
		}
	\KwRet \(PP\setminus Imp\)
\end{algorithm}

Comments on Algorithm~\ref{alg:possible_large_primitives}
\begin{itemize}
	\item Line~\ref{alg:possible-large-primitives:linear_combinations} ... Note the use of Lemma~\ref{lemma:M-lcomb}
	\item Line~\ref{alg:possible-large-primitives:further_optimization} ... This loop is used to find some possible elements not yet removed that can not be primitives
\end{itemize}
From the comments made we easily deduce the following:
\begin{proposition}\label{prop:possible_large_primitives}
	Algorithm~\ref{alg:possible_large_primitives} correctly eliminates from \(\left(p_{n-1}, p_n\right)\) some elements that can not primitives in any semigroup whose set of primitives contains the input. 
\end{proposition}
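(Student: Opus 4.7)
The plan is to verify, case by case, that each of the three filters in Algorithm~\ref{alg:possible_large_primitives} eliminates only elements that cannot belong to the primitive set of any numerical semigroup $S$ with $P\subseteq \primitivesoper(S)$. I would fix throughout such a hypothetical $S$, together with a candidate $p\in(p_{n-1},p_n)$ that is about to be discarded, and show that in each of the three removal scenarios a contradiction arises.

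The first filter removes $p$ if $p\in lcombs=\langle P\rangle\cap[0,M]$, and the second if $M-p\in lcombs$. In the first case, since $p_{n-1}$ and $p_n$ are consecutive in $P$, no element of $P$ lies in $(p_{n-1},p_n)$, so $p\notin P$; therefore $p$ admits a representation as a sum of at least two elements of $P\subseteq S$, and hence $p$ cannot be a minimal generator of $S$. In the second case, $M-p\in\langle P\rangle\subseteq S$ and $0<M-p<M$; if $p$ were a primitive of $S$, then in particular $p\in S\setminus\{0,M\}$, and Lemma~\ref{lemma:M-lcomb} applied to $x=p$ would force $M-p\notin S$, a contradiction.

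For the loop at Line~\ref{alg:possible-large-primitives:further_optimization} I would read the otherwise undefined $Q$ as $P\setminus\{M\}$, which is the only interpretation giving a non-trivial test (since $M\in\langle P\cup\{p\}\rangle$ holds automatically). The hypothesis $M\in\langle Q\cup\{p\}\rangle$ then expresses $M$ as a non-negative integer combination of elements strictly smaller than $M$. If $p$ were a primitive of $S$, then $Q\cup\{p\}\subseteq\primitivesoper(S)\subseteq S$, and that expression would realize $M$ as a sum of at least two nonzero elements of $S$, contradicting the primitivity of $M\in P$. The only real subtlety in the whole argument is uniformity: each case must be established using solely the hypothesis $P\subseteq\primitivesoper(S)$, since the conclusion is required against every possible extension $S$ simultaneously. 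Combining the three cases then gives the claim.
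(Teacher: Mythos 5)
Your proposal is correct and takes essentially the same approach as the paper, which establishes the proposition only through its comments on Algorithm~\ref{alg:possible_large_primitives} (Lemma~\ref{lemma:M-lcomb} justifying the removal of \(\mathit{lcombs}\cup(M-\mathit{lcombs})\), and the primitivity of \(M\) justifying the final loop); you simply make the three filters explicit, and your reading of the undefined \(Q\) as \(P\setminus\{M\}\) is the intended one. The only nitpick is that Lemma~\ref{lemma:M-lcomb} formally assumes \(M\) is the \emph{maximum} primitive of \(S\), whereas the proposition quantifies over all semigroups whose primitives contain \(P\); since the lemma's one-line argument only uses that \(M\) is a primitive, your second case goes through unchanged.
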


We call \texttt{PossibleLargePrimitives} to a function implementing Algorithm~\ref{alg:possible_large_primitives}. It will be used in the sequel.

\medskip

Let \(P=\{m=p_1,\ldots,p_n=M\}\), with \(n>1\), be a set of integers such that \(p_1<\cdots<p_n\). Algorithm~\ref{alg:ns_containing_given_primitives} computes the numerical semigroups whose set of primitives Q satisfies: \(P\subseteq Q\subseteq P\cup[p_{n-1},M]\).

\begin{algorithm}[h]  \caption{Algorithm to compute numerical semigroups containing given primitives and other ``large'' primitives.}\label{alg:ns_containing_given_primitives}
	\DontPrintSemicolon
	\SetKwComment{cgap}{\# }{} %comment in gap style
	\KwData{A set \(P=\{m=p_1,\ldots,p_n=M\}\) of positive integers, with \(p_1<\cdots<p_n\) and \(n>1\).}\cgap{m stands for multiplicity and M for maximum primitive}
	\KwResult{Computes the numerical semigroups whose set of primitives \(Q\) satisfies: \(P\subseteq Q\subseteq P\cup[p_{n-1},M]\).}\;
	A := PossibleLargePrimitives(P) \hfill \cgap{It is used Algorithm~\ref{alg:possible_large_primitives}.}
	listsprim := [\quad ] \hfill\cgap{to store the numerical semigroups computed.}
	\nl partition := a partition of A into residue classes modulo \(m\) \label{alg:ns_containing_given_primitives:partition}\;
	\nl \For{k in [1..m-n]}{ \cgap{note that a numerical semigroup has at most m primitives}\label{alg:ns_containing_given_primitives:edim}
	\nl		iter := Combination(partition,k) \cgap{compute the subsets of size k of partition}  \label{alg:ns_containing_given_primitives:combination}
			\For{it in iter}{
	\nl			cartesian := CartesianProduct(it) \label{alg:ns_containing_given_primitives:cartesian}
				\For{C in cartesian}{
					\If{\(Gcd(P\cup C) = 1\)}{
						mingens := MinimalGenerators(\(\langle P\cup C \rangle\))\;
	\nl						\If{\(P\cup C\) = mingens}{\label{alg:ns_containing_given_primitives:inserttest}
							\cgap{a test can be inserted here} 
	\nl						Add(listsprim,mingens)\label{alg:ns_containing_given_primitives:addset}
						}
					} 
				}
			}
		} 
	\KwRet listsprim
\end{algorithm}

Comments on Algorithm~\ref{alg:ns_containing_given_primitives}
\begin{itemize}
	\item Line~\ref{alg:ns_containing_given_primitives:partition} is used for taking advantage of the fact that a residue class modulo the multiplicity has at most one primitive (see Lemma~\ref{lemma:primitives_incongruent_mod_m}).
	\item Line~\ref{alg:ns_containing_given_primitives:edim}: note that the semigroups to be computed have at most \(m\) primitives, n of which are given, and use Lemma~\ref{lemma:primitives_incongruent_mod_m}.
	\item Line~\ref{alg:ns_containing_given_primitives:cartesian} To take exactly one element in each residue class.
    \item Line~\ref{alg:ns_containing_given_primitives:inserttest} Testing some property (for instance Wilf's inequality) can be inserted in this \texttt{if} clause.
	\item Line~\ref{alg:ns_containing_given_primitives:addset} Adds mingens to listsprim.
\end{itemize}

From the comments made we easily deduce the following:
\begin{proposition}\label{prop:ns_containing_given_primitives}
	Algorithm~\ref{alg:ns_containing_given_primitives} correctly computes the claimed output.
\end{proposition}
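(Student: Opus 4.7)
The plan is to establish correctness in two directions: \emph{soundness} (every element returned is a valid semigroup of the claimed form) and \emph{completeness} (every valid semigroup is found), and then to argue absence of duplicates.

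For soundness, I would trace the construction of any tuple that reaches line \ref{alg:ns_containing_given_primitives:addset}. By construction, the additional primitives $C$ are drawn from the Cartesian product of subsets of $A$, and $A\subseteq (p_{n-1},p_n)$ by the specification of \texttt{PossibleLargePrimitives} (Proposition~\ref{prop:possible_large_primitives}), so $P\cup C\subseteq P\cup[p_{n-1},M]$. The gcd check together with Lemma~\ref{lemma:coprime-generate-ns} ensures $\langle P\cup C\rangle$ is a numerical semigroup, and the equality test in line~\ref{alg:ns_containing_given_primitives:inserttest} verifies that $P\cup C$ is precisely the set of primitives of this semigroup, so the output meets the specification.

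For completeness, let $S$ be any numerical semigroup with minimal generating set $Q$ satisfying $P\subseteq Q\subseteq P\cup[p_{n-1},M]$, and set $C:=Q\setminus P\subseteq (p_{n-1},p_n)$. Since every element of $C$ is a primitive of such a semigroup containing $P$ as primitives, Proposition~\ref{prop:possible_large_primitives} gives $C\subseteq A$. By Lemma~\ref{lemma:primitives_incongruent_mod_m}, the primitives of $S$ lie in pairwise distinct residue classes modulo $m$; in particular the elements of $C$ lie in distinct residue classes of $A$ modulo $m$, and $|Q|\le m$, hence $k:=|C|\le m-n$. Therefore when the outer loop of line~\ref{alg:ns_containing_given_primitives:edim} reaches this $k$, the iterator in line~\ref{alg:ns_containing_given_primitives:combination} produces the $k$-subset of \emph{partition} consisting of exactly those residue classes that contain the elements of $C$, and the corresponding Cartesian product in line~\ref{alg:ns_containing_given_primitives:cartesian} yields $C$ as one of its tuples. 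Both guard clauses then succeed, and $Q$ is appended to \emph{listsprim}.

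Uniqueness of the output is immediate: the minimal generating set of a numerical semigroup is unique, and each enumerated $Q=P\cup C$ arises from a single $(k,\text{residue-class choice},\text{representative choice})$, so no semigroup is added twice. The main subtlety worth flagging is the boundary case $k=0$ (i.e.\ the possibility $Q=P$): if the caller wishes $\langle P\rangle$ itself to be counted when $P$ is a minimal generating set, this has to be treated separately, since the loop in line~\ref{alg:ns_containing_given_primitives:edim} starts at $k=1$. The only nontrivial step in the argument is the reduction to the residue-class partition, and this is exactly where Lemma~\ref{lemma:primitives_incongruent_mod_m} does all the work; everything else is bookkeeping.
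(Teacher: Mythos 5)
Your argument is correct and is essentially a fleshed-out version of the reasoning the paper leaves implicit in its comments on Algorithm~\ref{alg:ns_containing_given_primitives}: Proposition~\ref{prop:possible_large_primitives} guarantees that \(A\) retains every admissible large primitive, Lemma~\ref{lemma:primitives_incongruent_mod_m} justifies both the residue-class partition and the bound \(k\le m-n\), and the gcd and minimal-generator tests give soundness, so the two proofs coincide in substance. Your flag about the \(k=0\) case (\(Q=P\)) is a fair reading of the stated specification, but in the paper that semigroup is added by the caller (Algorithm~\ref{alg:tree_rooted_nM}) before \texttt{NSgpsWithGivenPrimitives} is invoked, so its omission from the loop starting at \(k=1\) is intentional rather than a defect.
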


We call \texttt{NSgpsWithGivenPrimitives} to a function implementing Algorithm~\ref{alg:ns_containing_given_primitives}.

\medskip

Algorithm~\ref{alg:tree_rooted_nM} is used to construct a tree of submonoids of \(\mathbb{N}\) rooted by some given submonoid. 
It uses the recursive local function \texttt{children}.
There are two kinds of leaves.

The nodes are submonoids of \(\mathbb{N}\), some of which are numerical semigroups.

The leaves are monoids with less than \texttt{len} possible large primitives. Observe that for \(\text{len}=1\) the whole tree is constructed. Experiments suggest that the \texttt{if} clause that appears in Line~\ref{alg:tree_rooted_nM}.\ref{alg:ns_containing_given_primitives:len_if_clause} may constitute an optimization. For values of maximum primitive around \(60\) we got better results for \(\text{len}\sim 14(M-m)/5\).

\begin{algorithm}[h]  \caption{Algorithm to explore a tree of submonoids of \(\mathbb{N}\)}\label{alg:tree_rooted_nM}
	\DontPrintSemicolon
	\SetKwComment{cgap}{\# }{} %comment in gap style
	\KwData{Positive integers \(m\) and \(M\), with \(m<M\). A record opt of options. The field opt.len is a positive integer}\cgap{m stands for multiplicity and M for maximum primitive}
	\KwResult{\(\maxprimset{M,m}\)}\;
	
	\SetKwFunction{proc}{}
	\SetKwProg{myproc}{children(X)}{}{}\cgap{A local recursive function}
	\nl \myproc{\proc}{ \label{alg:tree_rooted_nM:children}
		\nl \(XX := \mathrm{PossibleLargePrimitives}(X)\)\hfill \cgap{Use Algorithm~\ref{alg:possible_large_primitives}}\;
		\For{r in XX}{
			\(gens := X\cup \{r\}\)\;
			\If{\(\gcd(gens)=1\)}{AddSet(LIST,\(gens\))}\;
	\nl		\eIf{Length(\(\mathrm{PossibleLargePrimitives}\)(gens)) > len}{ \label{alg:ns_containing_given_primitives:len_if_clause}
	\nl			children(gens) \cgap{continue the construction of the tree}
			}{
			\(listsprim := \mathrm{NSgpsWithGivenPrimitives}(gens,opt)\) \hfill\cgap{Use Algorithm~\ref{alg:ns_containing_given_primitives}}
			\(LIST = LIST \cup listsprim\)
			}
		}
	}\;

	LIST = [\quad ] \hfill \cgap{stores the numerical semigroups computed.}
\nl	\eIf{\(m>M/2\)}{ \label{alg:ns_containing_given_primitives:depth2}\hfill\cgap{the maxprim depth 2 case, as in Algorithm~\ref{alg:naive}}
		\For{Y in Combinations([m+1,M-1])}{
			prims = \([m..M]\cup Y\)\;
			\If{\(\mathrm{Gcd}(prims)=1\)}{\label{alg:ns_containing_given_primitives:insert_test}
	\nl						\cgap{tests can be inserted here}				
				\(\mathrm{Add}(\mathrm{LIST},prims)\)				
			}
		}
	}
	{
		\If{Gcd(m,M)=1}{
			Add(LIST,[m,M])
		}
		children([m,M])\;	
}
	\KwRet LIST
\end{algorithm}

	Comments on Algorithm~\ref{alg:tree_rooted_nM}
\begin{itemize}
	\item Line~\ref{alg:tree_rooted_nM:children} The local recursive function affects the global variable LIST.
	\item Line~\ref{alg:ns_containing_given_primitives:insert_test} When only counting is in cause, the \texttt{if} block can be drastically simplified by using the result in Section~\ref{sec:optimizations_for_counting}.
\end{itemize}

From the comments made we easily deduce the following:
\begin{proposition}\label{prop:tree_rooted_nM}
	Algorithm~\ref{alg:tree_rooted_nM} correctly computes the set \(\maxprimset{M,m}\) of numerical semigroups with multiplicity \(m\) and maximum primitive \(M\).
\end{proposition}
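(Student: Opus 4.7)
The plan is to verify soundness (every set added to \(\mathrm{LIST}\) is the minimal generating set of some \(S \in \maxprimset{M,m}\)) and completeness (every such \(S\) has its primitive set added). Both \texttt{AddSet} and set union keep \(\mathrm{LIST}\) a set, so repeated insertions are harmless; it suffices to argue at the level of membership. The two top-level branches of Algorithm~\ref{alg:tree_rooted_nM} can be handled separately. For \(m > M/2\), the code coincides with the depth-2 loop of Algorithm~\ref{alg:naive}, whose correctness follows from Lemma~\ref{lemma:depth2-is-easy}: any \(S \in \maxprimset{M,m}\) in this regime has primitive depth \(2\), so its set of primitives has the form \(\{m, M\} \cup Y\) for some \(Y \subseteq (m, M)\) with \(\gcd(\{m, M\} \cup Y) = 1\), and each such set is inspected.

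For the tree branch, soundness would be proved by induction on recursion depth. The invariant is that at every node \(X = \{m, p_1, \ldots, p_k, M\}\) reached by \texttt{children}, the set \(X\) is the minimal generating set of \(\langle X \rangle\) whenever \(\gcd(X) = 1\). At each step \texttt{PossibleLargePrimitives} removes from the candidate interval \((p_k, M)\) any element already in \(\langle X \rangle\) (so the new primitive is not reducible via the previous ones), any element whose \(M\)-complement lies in \(\langle X \rangle\) (so \(M\) remains minimal, by Lemma~\ref{lemma:M-lcomb}), and elements flagged by the \texttt{Imp} loop as causing \(M\) to become non-primitive. A size argument then ensures that the earlier generators \(p_i\) stay minimal after each insertion, since any expression of \(p_i\) through the remaining generators would have to avoid those larger than \(p_i\). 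Soundness of the \texttt{NSgpsWithGivenPrimitives} branch is delegated to Proposition~\ref{prop:ns_containing_given_primitives}.

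For completeness, fix \(S \in \maxprimset{M,m}\) with \(m \le M/2\) and primitives \(m = q_0 < q_1 < \cdots < q_e = M\). The case \(e = 1\) is covered by the explicit insertion of \([m, M]\) when \(\gcd(m, M) = 1\). For \(e \ge 2\) I would exhibit a path in the tree from the root \(\{m, M\}\) toward \(P(S)\) by inductively showing that at the node \(X_i := \{m, q_1, \ldots, q_{i-1}, M\}\) the primitive \(q_i\) survives \texttt{PossibleLargePrimitives}\((X_i)\): it lies in \((q_{i-1}, M)\) by ordering, is not in \(\langle X_i \rangle\) because it is a minimal generator of \(S\), its \(M\)-complement lies outside \(S \supseteq \langle X_i \rangle\) by Lemma~\ref{lemma:M-lcomb}, and it escapes the \texttt{Imp} filter because otherwise \(M\) would be expressible as a nontrivial combination of elements of \(P(S) \setminus \{M\}\), contradicting the minimality of \(M\) in \(S\). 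If the algorithm recurses along the full path, \(P(S)\) is added by \texttt{AddSet} at the step that inserts \(q_{e-1}\). If instead it switches to \texttt{NSgpsWithGivenPrimitives} at some intermediate \(\mathrm{gens} = \{m, q_1, \ldots, q_k, M\}\), then \(P(S) \subseteq \mathrm{gens} \cup [q_k, M]\) and Proposition~\ref{prop:ns_containing_given_primitives} guarantees \(S\) appears in the returned list.

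The main obstacle will be verifying that each \(q_i\) survives the \texttt{Imp} filter inside \texttt{PossibleLargePrimitives}. The symbol \(Q\) in Algorithm~\ref{alg:possible_large_primitives} is undefined as written; I would interpret it as the input set \(P\) with \(M\) removed, so that the test ``\(M \in \langle Q \cup \{p\} \rangle\)'' is non-trivial and means that \(M\) would become redundant once \(p\) is admitted as a primitive. Under this reading, the argument above applies cleanly; without it, a secondary verification is needed to confirm that the implementation matches the intended semantics of Proposition~\ref{prop:possible_large_primitives}.
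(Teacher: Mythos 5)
Your proposal is correct and follows the same route the paper intends: the paper's own ``proof'' is only the assertion that correctness follows from the comments on the algorithm (i.e.\ the split into the depth-2 branch via Lemma~\ref{lemma:depth2-is-easy} and the tree branch delegating to Propositions~\ref{prop:possible_large_primitives} and~\ref{prop:ns_containing_given_primitives}), and your soundness invariant plus the path/survival argument for completeness is precisely the detailed version of that reasoning. Your reading of the undefined \(Q\) in Algorithm~\ref{alg:possible_large_primitives} as \(P\setminus\{M\}\) is the intended one (with \(Q=P\) the \texttt{Imp} filter would discard everything), so flagging and resolving it as you did is exactly right.
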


\medskip

The use of the tree in (Algorithm~\ref{alg:tree_rooted_nM}) permits to use a small amount of memory. In the same direction, one can take advantage of the use of iterators (e.g. the \textsf{GAP} iterators \texttt{IteratorOfCombinations} and \texttt{IteratorOfCartesianProduct}).

We implemented our algorithms in the GAP language, which is high level one. We make use of functions available in GAP and in the numericalsgps GAP package. This has serious performance limitations. The numbers achieved are nevertheless interesting. 
Not negligible is also the fact that the code, being written in a high level programming language, is easy to verify. The numbers obtained can be used by any one whishing to do faster implementations to ensure the absence of bugs.   

%% \newpage

%%%%%%%%%%% optimizations for counting %%%%%%%%%%
\section{Optimizations with counting purposes}\label{sec:optimizations_for_counting}
This section is devoted to optimizations that can be made when having just counting purposes. In particular, we prove Proposition \ref{prop:cardinality_of_depth-pdepth_bijection} and use it to avoid the need to explore the primitive depth 2 numerical semigroups while counting.

Given a rational number \(r\) and a set \(A\subset \mathbb{N}\), we denote the set \( \{ra: a\in A\} \) by \(r\cdot A\) or \(\frac{A}{1/r}\). Given a numerical semigroup \(S\), let \(\leftsoper(S)\) denote the set of left elements \(\{ s\in S: s< \Frobeniusoper(S)\}\), and \(\leftsoper'(S)\) denote \(\leftsoper(S) \cup \{\Frobeniusoper(S)\} \). Let \(\mu\) be the Möbius function. We define the map \(\Phi: \maxprimset{n} \to  \frobset{n} \) by \( \Phi(S)= (S\setminus \{n\} ) \cup (n,\infty) \). We prove the following result.

\begin{proposition}\label{prop:size_of_prim_depth_2}
	Given a positive integer \(n>2\), the number of subsets of \((n/2,n]\) that contain \(n\) and have the greatest common divisor equal to \(1\) is equal to 
	\[ \sum_{d\mid n} \mu \left(\frac{n}{d}\right) \cdot \left(2^{\lfloor\frac{d-1}{2} \rfloor} -1\right).\]
\end{proposition}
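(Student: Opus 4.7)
My plan is to prove this by Möbius inversion over the divisors of $n$. For each divisor $d$ of $n$, let $a(d)$ denote the number of subsets of $(n/2, n]$ that contain $n$ and consist entirely of multiples of $d$, and let $b(d)$ denote the number of such subsets whose greatest common divisor equals exactly $d$. Because any subset containing $n$ has gcd dividing $n$, we have the relation $a(d) = \sum_{d \mid d' \mid n} b(d')$, so Möbius inversion gives
\[ b(1) = \sum_{d' \mid n} \mu(d') \, a(d'), \]
and $b(1)$ is precisely the quantity we wish to evaluate.

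The next step is an explicit count of $a(d)$. The multiples of $d$ lying in $(n/2, n]$ are $dk$ with $n/(2d) < k \le n/d$. Setting $e = n/d$, one of these multiples is forced (namely $n$ itself, when $k = e$), and the remaining $e - 1 - \lfloor e/2 \rfloor$ multiples may be chosen freely. A short case split on the parity of $e$ shows that $e - 1 - \lfloor e/2 \rfloor = \lfloor (e-1)/2 \rfloor$, so
\[ a(d) = 2^{\lfloor (n/d - 1)/2 \rfloor}. \]

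Substituting this into the Möbius inversion formula and re-indexing via $d = n/d'$ yields
\[ b(1) = \sum_{d \mid n} \mu\!\left(\frac{n}{d}\right) 2^{\lfloor (d-1)/2 \rfloor}. \]
Finally, since $n > 2$ in particular $n > 1$, the standard identity $\sum_{d \mid n} \mu(n/d) = 0$ lets me subtract $1$ inside each summand without changing the sum, producing the stated formula $\sum_{d \mid n} \mu(n/d) (2^{\lfloor (d-1)/2 \rfloor} - 1)$.

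I do not anticipate a serious obstacle; the only mildly delicate point is verifying the floor identity $e - 1 - \lfloor e/2 \rfloor = \lfloor (e-1)/2 \rfloor$ used to get a uniform exponent regardless of parity, and checking the boundary case when $d = n$ (where $n/(2d) < k \le 1$ forces $k = 1$, contributing the subset $\{n\}$ itself and giving $a(n) = 2^0 = 1$, consistent with the formula).
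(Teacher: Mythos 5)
Your proof is correct, but it takes a genuinely different route from the paper. The paper observes that the subsets in question are precisely the minimal generating sets of the numerical semigroups with maximum primitive \(n\) and primitive depth \(2\), so their number is \(A_n(2)\), and then invokes Proposition~\ref{prop:cardinality_of_depth-pdepth_bijection}, whose proof rests on the bijection \(\Psi\colon \frobset{n}(k)\to\bigcup_{d\mid n}\maxprimset{d}(k)\) of Lemma~\ref{lemma:bijection_of_depth_and_prim_depth}, Möbius inversion, and the easy count \(N_d(2)=2^{\lfloor (d-1)/2\rfloor}-1\). You instead work directly with the subsets themselves: you classify them by the exact value of their gcd (necessarily a divisor of \(n\), since \(n\) lies in every subset), count for each \(d\mid n\) the subsets consisting of multiples of \(d\) via the elementary computation \(a(d)=2^{\lfloor (n/d-1)/2\rfloor}\), invert, reindex by \(d\mapsto n/d\), and use \(\sum_{d\mid n}\mu(n/d)=0\) (valid since \(n>1\)) to shift each summand by \(1\). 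All steps check out, including the parity-independent identity \(e-1-\lfloor e/2\rfloor=\lfloor(e-1)/2\rfloor\) and the boundary case \(d=n\). Your argument is more elementary and self-contained --- it never mentions numerical semigroups --- and, combined with the paper's identification of these subsets with the semigroups counted by \(A_n(2)\), it would yield an independent proof of the \(k=2\) case of Proposition~\ref{prop:cardinality_of_depth-pdepth_bijection}. What the paper's route buys is generality: the bijection \(\Psi\) treats every primitive depth \(k\) at once, which is what the counting application requires, with the present proposition falling out as a special case.
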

We first prove the following lemma to prove the above proposition.    
\begin{lemma}\label{lemma:bijection_of_depth_and_prim_depth} Let \(n>2\) and \(k\leq n\). Then we have a bijection   
	\[ \Psi: \frobset{n}(k) \to \bigcup_{d\mid n} \maxprimset{d}(k) \]
	defined by
	\[ \Psi(S) = \left\langle \frac{\leftsoper'(S) }{\gcd( \leftsoper'(S))} \right\rangle. \]
\end{lemma}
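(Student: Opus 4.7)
The strategy is to check that \(\Psi\) is well-defined, construct an explicit inverse, and verify the two compositions are the identity. Throughout, for \(S \in \frobset{n}(k)\) I would let \(\delta = \gcd(\leftsoper'(S))\); since \(n \in \leftsoper'(S)\) this gives \(\delta \mid n\), and I would set \(d = n/\delta\).

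First I would show \(\Psi(S) \in \maxprimset{d}(k)\). The set \(\leftsoper'(S)/\delta \subset [0,d]\) has gcd \(1\), so by Lemma~\ref{lemma:coprime-generate-ns} it generates a numerical semigroup \(T\) with multiplicity \(m(S)/\delta\). The maximum primitive of \(T\) is \(d\): if one could write \(d = a+b\) with \(a,b\) smaller positive elements of \(\leftsoper'(S)/\delta\), scaling by \(\delta\) would yield \(n = \delta a + \delta b\) with both summands in \(\leftsoper(S)\subseteq S\), forcing \(n \in S\), a contradiction. The primitive depth of \(T\) equals
\[\left\lceil \frac{d}{m(S)/\delta}\right\rceil = \left\lceil\frac{n}{m(S)}\right\rceil = \left\lceil\frac{n+1}{m(S)}\right\rceil = k,\]
where the middle equality holds because \(n \notin S\) implies \(m(S) \nmid n\).

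Next I would construct the inverse: given \(T \in \maxprimset{d}(k)\) with \(d \mid n\), set \(\delta' = n/d\) and define
\[\Psi^{-1}(T) = \delta'\cdot (T\cap[0,d-1]) \;\cup\; \{n+1, n+2, \ldots\}.\]
Closure under addition uses that \(d \in \primitivesoper(T)\), which rules out \(\delta' a + \delta' b = n\) for positive \(a,b \in T \cap [0, d-1]\); the Frobenius number is \(n\) since \(d \notin T \cap [0, d-1]\); and a ceiling computation analogous to the one above, together with \(m(T) \nmid d\) from Lemma~\ref{lemma:basics}, shows that the depth of \(\Psi^{-1}(T)\) is \(k\).

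Finally I would verify the compositions. For \(\Psi\circ\Psi^{-1}(T) = T\), observe that \(\leftsoper'(\Psi^{-1}(T)) = \delta'\cdot(T\cap[0,d])\) has gcd \(\delta'\) (since \(\primitivesoper(T) \subseteq T \cap [0,d]\) already has gcd \(1\)), so applying \(\Psi\) returns the semigroup generated by \(T\cap[0,d]\), which is \(T\). For \(\Psi^{-1}\circ\Psi(S) = S\), the key step is to show \(T \cap [0,d-1] = \leftsoper(S)/\delta\) where \(T = \Psi(S)\); the nontrivial inclusion writes \(t \in T\) with \(t \le d-1\) as a nonnegative combination of elements of \(\leftsoper'(S)/\delta\) that cannot use the top generator \(d\) (else \(\delta t \ge n\)), so \(\delta t \in \leftsoper(S)\). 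The main obstacle will be carefully setting up the ceiling-function identities to equate depth with primitive depth on both sides, and handling the degenerate case \(d=1\) where \(T\) collapses to \(\mathbb{N}\).
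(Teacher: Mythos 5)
Your proposal is correct and takes essentially the same route as the paper: your explicit inverse \(T\mapsto \frac{n}{d}\cdot(T\cap[0,d-1])\cup(n,\infty)\) is exactly the preimage the paper constructs in its surjectivity argument, with your two composition checks replacing its separate injectivity and surjectivity verifications. If anything you are more explicit than the paper, which never spells out that depth \(k\) is carried to primitive depth \(k\) and back (your ceiling identities via \(\multiplicityoper(S)\nmid n\) and, by Lemma~\ref{lemma:basics}, \(\multiplicityoper(T)\nmid d\)), and the remaining items you flag, such as the degenerate case \(d=1\), are indeed routine.
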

\begin{proof} 
	We first consider the injectivity of \(\Psi\). Let \(S,R \in \frobset{n} (k) \) such that \(\Psi(S)=\Psi(R)\). Thus 
	\[ \left\langle \frac{\leftsoper'(S) }{\gcd( \leftsoper'(S))} \right\rangle = \left\langle \frac{\leftsoper'(R) }{\gcd( \leftsoper'(R))} \right\rangle.\]
	We note that \(n\) is the largest element in both \(\leftsoper'(S)\) and \(\leftsoper'(R)\), and in either of the sets \(n\) cannot be written as a sum of two non-zero elements. Thus the maximum primitive of both \(\Psi(S)\) and \(\Psi(R)\) is equal to \(n/\gcd( \leftsoper'(S))\) which is also equal to \(n/\gcd( \leftsoper'(R))\). Thus \(\gcd( \leftsoper'(S))\) is equal to \(\gcd( \leftsoper'(R))\), which we denote by \(x\). We therefore have
	\[ \left\langle \frac{\leftsoper'(S) }{x} \right\rangle = \left\langle \frac{\leftsoper'(R) }{x} \right\rangle\]
	and thus \( \left\langle \leftsoper'(S) \right\rangle = \left\langle \leftsoper'(R) \right\rangle \). Moreover since \(n\) is a primitive in both \(\left\langle \leftsoper'(S) \right\rangle\) and \(\left\langle \leftsoper'(R) \right\rangle\) we have \(\left\langle \leftsoper'(S)\setminus \{n\} \right\rangle = \left\langle \leftsoper'(R) \setminus \{n\} \right\rangle \) or \(\left\langle \leftsoper(S)\right\rangle = \left\langle \leftsoper(R) \right\rangle \). Thus
	\[ S = \left\langle \leftsoper(S) \cup (n, \infty)\right\rangle = \left\langle \leftsoper(R) \cup (n, \infty) \right\rangle = R \]
	which shows the injectivity of \(\Psi\).
	
	We next look at the surjectivity of \( \Psi \). Given \(T\in \maxprimset{d}(k)\), let \(T'\) be the set \( \left(\langle \frac{n}{d} \cdot \primitivesoper(T) \rangle\setminus \{n\}\right)  \cup (n, \infty)\). We observe that \(T'\) contains \(0\), and it is cofinite. Moreover, the sets \(\left(\langle \frac{n}{d} \cdot \primitivesoper(T) \rangle\setminus \{n\}\right)\) and \((n, \infty)\) are both additively closed, and the sum of an element of \(\left(\langle \frac{n}{d} \cdot \primitivesoper(T) \rangle\setminus \{n\}\right)\) with an element of \((n,\infty)\) belongs in \((n, \infty)\). Thus \(T'\) is an additively closed set, and hence it is a numerical semigroup. We observe that \( \leftsoper(T')\) is \(\langle \frac{n}{d} \cdot \primitivesoper(T) \rangle \cap [0,n) \) and \(\Frobeniusoper(T')\) is \(n\). Thus
	\[ \Psi ( T' ) = \left\langle \frac{ ((\langle \frac{n}{d} \cdot \primitivesoper(T) \rangle) \cap [0,n)) \cup \{n\} }{n/d} \right\rangle = \left\langle \frac{ \frac{n}{d} \cdot \primitivesoper(T) }{n/d} \right\rangle = \langle \primitivesoper (T) \rangle = T. \]
	This shows the surjectivity, and thus \( \Psi \) is a bijection.
\end{proof}

We now prove Proposition \ref{prop:cardinality_of_depth-pdepth_bijection}.\\

\noindent\emph{Proof of Proposition \ref{prop:cardinality_of_depth-pdepth_bijection}:}
	We consider the cardinality of the sets in the bijection \(\Psi\) in Lemma \ref{lemma:bijection_of_depth_and_prim_depth} and conclude that 
	\[ \frobcard{n}(k) = \sum_{d\mid n} \maxprimcard{d}(k). \]
	We apply the Möbius inversion theorem~\cite[Theorem 266]{HardyWright2008Book-introduction} to this, and obtain
	\[A_n(k) = \sum_{d\mid n} \mu \left(\frac{n}{d}\right) \cdot N_{d}(k).\]
	We next consider the case when \(k\) is equal to 2. We observe that the depth 2 numerical semigroups with Frobenius number \(d\) are of the form \( \{0\} \cup X \cup (d,\infty) \), where \(X\) is a non empty subset of \( (d/2, d) \). Thus \( N_{d}(2) \) is equal to \( 2^{\lfloor\frac{d-1}{2} \rfloor} -1 \). Thus
	\[A_n(2) = \sum_{d\mid n} \mu \left(\frac{n}{d}\right) \cdot \left(2^{\lfloor\frac{d-1}{2} \rfloor} -1\right). \qedhere \]

\noindent \textit{Proof of Proposition \ref{prop:size_of_prim_depth_2}}:
We observe that every subset \(A\) of \((n/2,n]\) that contains \(n\) and has its greatest common divisor equal to \(1\) gives us a unique numerical semigroup \(\langle A\rangle\) with maximum primitive \(n\) and primitive depth $2$. Thus the number of such subsets is precisely \(A_n(2)\), and the result follows from Proposition \ref{prop:cardinality_of_depth-pdepth_bijection}.      \qed

%% \newpage

%%%%%%%%%%% implementation %%%%%%%%%%
\section{Computational results}\label{sec:computations}
In this section we discuss the results of our computations and, in particular, we provide the Table \ref{table:counting_by_maxprim-and-frob} with the values \(\maxprimcard{61}\) and \(\maxprimcard{62}\). We also discuss some patterns in the data. 
The following result enlists the known major classes of numerical semigroups that satisfy Wilf's conjecture.
\begin{theorem}\label{thm:Wilf_known_cases}
	Wilf's conjecture holds for all numerical semigroups satisfying one of the following conditions:
	\begin{enumerate}
		\item \(e \le 3\)~\cite{FroebergGottliebHaeggkvist1987SF-numerical}
		\item \(c \le 3m\)~\cite{Eliahou2018JEMS-Wilfs} 
		\item \(e \ge m/3\)~\cite{Eliahou2020EJC-graph}
		\item \(|L| \le 12\)~\cite{EliahouMarin-Aragon2021CiA-numerical}
		\item \(m \le 19\)~\cite{KliemStump2022DCG-new}
		\item \(|(m,2m)|\geq \sqrt{3m} \)~\cite{DelgadoKumarMarion2025pp-counting}
		\item \(g \le 100\)~\cite{DelgadoEliahouFromentin2025JoA-verification}
		\item Elements of \(P\) are in an arithmetic progression.~\cite{Sammartano2012SF-Numerical}
	\end{enumerate}
\end{theorem}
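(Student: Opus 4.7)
The theorem is a compendium of eight independent results already established in the literature, so the proof plan is not to construct a unified argument but to attribute each clause to its source. I would walk through the list and, for each item, invoke the corresponding reference: item (1) is the classical two-generator case of Sylvester together with the Fröberg--Gottlieb--Häggkvist extension to \(e=3\); item (2) is Eliahou's ``depth at most three'' theorem; item (3) is Eliahou's graph-theoretic argument giving Wilf's inequality whenever the embedding dimension is at least one-third of the multiplicity; item (4) is Eliahou and Marín-Aragón's treatment of the cases with at most twelve left elements; item (5) is Kliem and Stump's computer-assisted verification up to multiplicity nineteen; item (6) is the density bound from the authors' companion paper; item (7) is the Delgado--Eliahou--Fromentin verification up to genus \(100\); and item (8) is Sammartano's explicit analysis of semigroups whose primitives form an arithmetic progression.

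Before taking the citations at face value, I would cross-check that each condition as stated here matches, without strengthening or weakening, the hypothesis under which the cited paper actually proves Wilf's conjecture. Concretely, I would verify that the bound in (3) is non-strict, that the quantity \(|L|\) in (4) coincides with \(\ell(S)\) as defined in Section~\ref{sec:introduction}, that ``\(c\le 3m\)'' in (2) matches Eliahou's ``depth \(\le 3\)'' (since \(\lceil c/m\rceil\le 3\iff c\le 3m\)), and that the inequality \(|(m,2m)|\ge\sqrt{3m}\) in (6) is stated with the same convention for counting elements of \(S\) in the open interval as in \cite{DelgadoKumarMarion2025pp-counting}. Each verification is a short transcription task rather than a proof.

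The main ``obstacle'', insofar as there is one, is purely bibliographic: there is no fresh mathematical content to establish, and the theorem functions as a reference table used to justify, later in Section~\ref{sec:computations}, why certain semigroups in \(\maxprimset{n}\) are already known to satisfy Wilf's inequality without further computation. It is worth noting in passing that the conditions overlap considerably---for instance (7) absorbs most of (5) at small genus, and (1) overlaps (3) for small \(m\)---so the list is organized for convenience of application rather than for logical minimality, and no effort toward mutual independence of the clauses is required.
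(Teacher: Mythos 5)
Your proposal matches the paper exactly: the theorem is presented there as a survey of known results, with each item justified solely by its citation and no further argument given. Your additional cross-checks of how each hypothesis is phrased (e.g.\ \(c\le 3m\) versus depth \(\le 3\)) are sensible bookkeeping but add nothing beyond what the paper itself does.
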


 We now show for the sake of non-triviality of our result that, in particular, these classes do not contain \(\maxprimset{60}\).  We consider the numerical semigroup \(\langle 50,52,53, 60 \rangle \) and denote it by \(S\). Since \(S\subset I=\langle[50,60]\rangle\), we observe that \(\Frobeniusoper(S)\geq \Frobeniusoper(I)=249 \) and \(g(S)\geq g(I)>100\), whereas \(\multiplicityoper(S)=\multiplicityoper(I)=50\). Also we have \(c(S)\geq c(I)=250\geq 3\multiplicityoper(S)= 150\). We note that the embedding dimension \(e(S)\) is 4, which is greater than 3 but less than \(m(S)/3\). Moreover, the number of primitives of \(S\) in the interval \((m(S), 2m(S))\) is less than \(\sqrt{3n}= \sqrt{180}\), where \(n\) denotes the maximum primitive of \(S\). Thus \(S\) does not satisfy any of the condition of Theorem \ref{thm:Wilf_known_cases}. We finally observe that many such numerical semigroups may be constructed with the same property by choosing sets of primitives, say \(A\), from the integer interval \([50,60]\) such that 50 and 60 belong to \(A\) with at least two more elements, the elements of \(A\) are not in an arithmetic progression, and such that the greatest common divisor of \(A\) is 1.\\
 
 We first observe that the growth in the values of \(\maxprimcard{n}\) is exponential just like the case of \(\frobcard{n}\), as is known from \cite[Corollary 1.4]{DelgadoKumarMarion2025pp-counting}. We note the linear pattern 
 in Figure~\ref{fig:log_scale}.
 
 We now discuss some patterns in the ratio of consecutive terms of the sequence \( (\maxprimcard{n})\). As the ratio \(\frobcard{n}/\maxprimcard{n}\) tends to 1 when \(n\) tends to infinity (see \cite[Corollary 1.4]{DelgadoKumarMarion2025pp-counting}), the asymptotic limit of \(\frobcard{n+1}/\frobcard{n} \) is same as that of \(\maxprimcard{n+1}/\maxprimcard{n} \). We observe from Figure~\ref{fig:ratios} that the ratio \(\maxprimcard{2n+1}/\maxprimcard{2n} \) tends to~2 whereas the ratio \(\maxprimcard{2n}/\maxprimcard{2n+1} \) tends to~1 as conjectured in equivalent terms in \cite[Conjecture 20]{Kumar2025CiA-Monotonicity}.  
 
 The numbers \(\maxprimcard{n}\) in Table~\ref{table:counting_by_maxprim-and-frob} were computed using our algorithms.
 The values of \(\maxprimcard{61}\) and \(\maxprimcard{62}\) are new. This allows us to prove Proposition~\ref{prop:new_values_An_Nn}.\\
 
 \noindent\textit{Proof of Proposition \ref{prop:new_values_An_Nn}: } We obtain the values of \(\maxprimcard{61}\) and \(\maxprimcard{62}\) by computation. Moreover by \cite[Theorem 1.5]{DelgadoKumarMarion2025pp-counting}, we have for \(n>0\)
 \[ \frobcard{n} = \sum_{d\mid n} \maxprimcard{\frac{n}{d}} \]
 and we use it together with the values from Table \ref{table:counting_by_maxprim-and-frob} to compute the values of
 \(\frobcard{61}\) and \(\frobcard{62}\). \qed\\
 
 \begin{table}[h]
 	\centering
 	\setlength{\tabcolsep}{8pt}
 	\setlength{\arrayrulewidth}{0.4mm}
 	\begin{tabular}{| r  r  r  | r  r  r  | r  r  r  |}
 		\hline
 		\(n\)&\( \maxprimcard{n} \)&\( \frobcard{n}  \) &\(n\)&\( \maxprimcard{n} \)&\( \frobcard{n}  \)  &\(n\) &\( \maxprimcard{n} \) &\( \frobcard{n}  \)  \\ \hline		
 		\rowcolor{black!5}
 		1 	 &1 	 &1 	 &26 	 &\num{8069} 	 &\num{8175} 	 &51 	 &\num{78793811} 	 &\num{78794277}\\
 		2 	 &0 	 &1 	 &27 	 &\num{16111} 	 &\num{16132} 	 &52 	 &\num{78922130} 	 &\num{78930306}\\
 		\rowcolor{black!5}
 		3 	 &1 	 &2 	 &28 	 &\num{16163} 	 &\num{16267} 	 &53 	 &\num{162306074} 	 &\num{162306075}\\
 		4 	 &1 	 &2 	 &29 	 &\num{34902} 	 &\num{34903} 	 &54 	 &\num{155991666} 	 &\num{156008182}\\
 		\rowcolor{black!5}
 		5 	 &4 	 &5 	 &30 	 &\num{31603} 	 &\num{31822} 	 &55 	 &\num{325800242} 	 &\num{325800297}\\
 		6 	 &2 	 &4 	 &31 	 &\num{70853} 	 &\num{70854} 	 &56 	 &\num{320507004} 	 &\num{320523279}\\
 		\rowcolor{black!5}
 		7 	 &10 	 &11 	 &32 	 &\num{68476} 	 &\num{68681} 	 &57 	 &\num{643198150} 	 &\num{643199112}\\
 		8 	 &8 	 &10 	 &33 	 &\num{137339} 	 &\num{137391} 	 &58 	 &\num{644611930} 	 &\num{644646833}\\
 		\rowcolor{black!5}
 		9 	 &19 	 &21 	 &34 	 &\num{140196} 	 &\num{140661} 	 &59 	 &\num{1317118755} 	 &\num{1317118756}\\
 		10 	 &17 	 &22 	 &35 	 &\num{292066} 	 &\num{292081} 	 &60 	 &\num{1269732856} 	 &\num{1269765591}\\
 		\rowcolor{black!5}
 		11 	 &50 	 &51 	 &36 	 &\num{269817} 	 &\num{270258} 	 &\textbf{61} 	 &\num{2640706082} 	 &\num{2640706083}\\
 		12 	 &35 	 &40 	 &37 	 &\num{591442} 	 &\num{591443} 	 &\textbf{62} 	 &\num{2606696049} 	 &\num{2606766903}\\
 		\rowcolor{black!5}
 		13 	 &105 	 &106 	 &38 	 &\num{581492} 	 &\num{582453} 	 &	 & 	 &\\
 		14 	 &92 	 &103 	 &39 	 &\num{1155905} 	 &\num{1156012} 	 &	 &	 &\\
 		\rowcolor{black!5}
 		15 	 &194 	 &200 	 &40 	 &\num{1160411} 	 &\num{1161319} 	 &	 &	 &\\
 		16 	 &195 	 &205 	 &41 	 &\num{2425710} 	 &\num{2425711} 	 &	 &	 &\\
 		\rowcolor{black!5}
 		17 	 &464 	 &465 	 &42 	 &\num{2285281} 	 &\num{2287203} 	 &	 &	 &\\
 		18 	 &382 	 &405 	 &43 	 &\num{4889433} 	 &\num{4889434} 	 &	 &	 &\\
 		\rowcolor{black!5}
 		19 	 &960 	 &961 	 &44 	 &\num{4783757} 	 &\num{4785671} 	 &	 &	 &\\
 		20 	 &877 	 &900 	 &45 	 &\num{9574948} 	 &\num{9575167} 	 &	 &	 &\\
 		\rowcolor{black!5}
 		21 	 &\num{1816} 	 &\num{1828} 	 &46 	 &\num{9674748} 	 &\num{9678844} 	 &	 &	 &\\
 		22 	 &\num{1862} 	 &\num{1913} 	 &47 	 &\num{19919901} 	 &\num{19919902} 	 &	 &	 &\\
 		\rowcolor{black!5}
 		23 	 &\num{4095} 	 &\num{4096} 	 &48 	 &\num{18893119} 	 &\num{18896892} 	 &	 &	 &\\
 		24 	 &\num{3530} 	 &\num{3578} 	 &49 	 &\num{40010840} 	 &\num{40010851} 	 &	 &	 &\\
 		\rowcolor{black!5}
 		25 	 &\num{8268} 	 &\num{8273} 	 &50 	 &\num{39437596} 	 &\num{39445886} 	 &   &   &\\
 		\hline	                  
 	\end{tabular}
 	
 	\caption{ Counting by maximum primitive and by Frobenius number\label{table:counting_by_maxprim-and-frob}}
 \end{table}

 Next we have a few images produced using the data in Table~\ref{table:counting_by_maxprim-and-frob}.
  
 \begin{figure}[h]
 	\centering
 \includegraphics[width=0.75\textwidth]{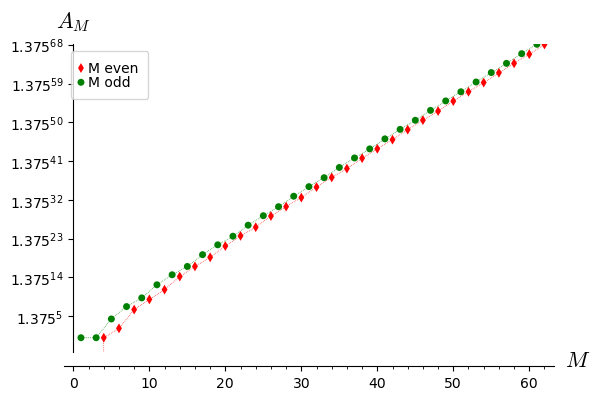}
 \caption{\label{fig:log_scale} \(\maxprimcard{M}\) using logarithmic scale in the \(y\)-axis} 
\end{figure} 
 
 \begin{figure}[h]
	\centering
 \includegraphics[width=0.75\textwidth]{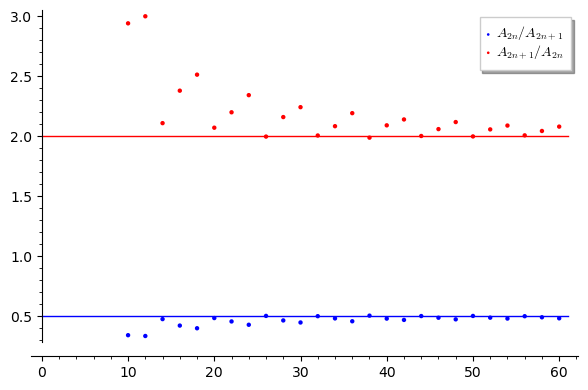}
  \caption{\label{fig:ratios} The behavior of the sequences of ratios \(\maxprimcard{2n}/\maxprimcard{2n+1}\) and \(\maxprimcard{2n+1}/\maxprimcard{2n}\)} 
\end{figure}

%% \newpage

%%%%%%%%%%% References %%%%%%%%
% in order to use bibtex comment the following line and uncomment the others
%% uncomment to use biblatex  
\printbibliography
%%
%%uncomment to use bibtex
%\bibliographystyle{plainurl}
%\bibliography{refs.bib}
%%%%%%%%%
%\input{refs}
%%%%%%%%%%%%%%%%%%%%%%%%%%%%%%%%%%%%%%%%%%%%%%%%%%%%%%%%%%%%%%%%%%%%%
\end{document}